\numberwithin{equation}{section}
\theoremstyle{plain}
 \newtheorem{theorem}{Theorem}[section]
 \newtheorem{lemma}[theorem]{Lemma}
 \newtheorem{corollary}[theorem]{Corollary}
\theoremstyle{definition}
 \newtheorem{definition}[theorem]{Definition}
 \newtheorem{remark}[theorem]{Remark}
\newenvironment{enumeratei}{\begin{enumerate}[\quad\upshape (i)]} {\end{enumerate}}
\newcommand \aijk [3] {\alpha(#1,#2,#3)}
\newcommand \pnul {q}
\newcommand \pegy {p}
\newcommand \fa {f_\alpha}
\newcommand \fb {f_\beta}
\newcommand \hha { h_{\alpha}}
\newcommand \hhb { h_{\beta}}
\newcommand \lexless[1] {<_{\sscr{\textup{lex}}}}
\renewcommand \phi{\varphi}
\newcommand \alg [1] {\mathfrak #1}
\newcommand \restrict [2] {{#1}\kern-1pt \rceil_{\kern-1pt #2}}
\newcommand \rhullo [1] {\textup{Conv}_{\real^{#1}}} 
\newcommand \Pow {\textup{Pow}}
\newcommand \Pnt {\textup{Points}}
\newcommand \Tany {T}
\newcommand \Trr {T_{\textup{RR}}}
\newcommand \Tczk {T_{\textup{new}}}
\newcommand \Tellips {T_{\textup{ellipses}}}
\newcommand \Tcircle {T_{\textup{circles}}}
\newcommand \Hull [1]{\textup{Conv}_{#1}}
\newcommand \Nnull {\mathbb N_0}
\newcommand \Nat {\mathbb N}
\newcommand \defiff {\overset{\textup{def}}\iff}
\newcommand \acirc[2] {C(#1,#2)}
\newcommand\set [1]{\{#1\}}
\newcommand \tuple [1] {\langle #1 \rangle}
\newcommand \pair [2] {\tuple{#1,#2}}
\newcommand \Bpair [2] {\Big\langle \kern-2pt #1,#2 \Big\rangle }
\newcommand \real {\mathbb R}
\DeclareMathOperator \Pfin {\textup{Pow}_{\textup{fin}}}
\newcommand \sscr [1] {\scriptscriptstyle{#1}}
\newcommand \flft {{^{\sscr{\textup {lf}}}\kern-2pt g}}
\newcommand \frht {{^{\sscr{\textup {rg}}}\kern-1.5pt g}}
\newcommand \Gaux {{^{\sscr{\textup {aux}}}\kern -1.2 pt G }}
\newcommand \Ftemp  {{^{\sscr{\textup {set}}}\kern -2.2 pt F }}
\newcommand \Fczk  {{^{\sscr{\textup {gf}}}\kern -2.2 pt F }}
\newcommand \graph [1] {\textup{Graph}(#1)}
\newcommand \dom [1] {\textup{Dom}(#1)}
\newcommand \id {\textup{id}}
\newcommand \vvec [1] {\vec #1\kern1pt }
\newcommand \nothing [1] {}
\newcommand \red [1] {{\color{red}#1\color{black}}}
\newcommand \mpar [1] {}
\newcommand \tbf[1] {\textbf{#1}}
\begin{document}
\title
{Representing convex geometries by almost-circles}

\author[G.\ Cz\'edli]{G\'abor Cz\'edli}
\email{czedli@math.u-szeged.hu}
\urladdr{http://www.math.u-szeged.hu/\textasciitilde{}czedli/}
\address{University of Szeged\\ Bolyai Institute\\Szeged,
Aradi v\'ertan\'uk tere 1\\ Hungary 6720}
\author[J.\ Kincses]{J\'anos Kincses}
\email{kincses@math.u-szeged.hu}
\urladdr{http://www.math.u-szeged.hu/\textasciitilde{}kincses/}
\address{University of Szeged\\ Bolyai Institute\\Szeged,
Aradi v\'ertan\'uk tere 1\\ Hungary 6720}

\thanks{This research was supported by
NFSR of Hungary (OTKA), grant number K 115518}

\begin{abstract} 
\emph{Finite convex geometries} are combinatorial structures. 
It follows from a recent result of M.\ Richter and L.G.\ Rogers that there is an infinite set $\Trr$ of planar convex polygons such that $\Trr$ with respect to geometric convex hulls is a locally convex geometry and  every finite convex geometry can be represented by restricting the
structure  of $\Trr$ to a finite subset in a natural way. 
An \emph{almost-circle of accuracy} $1-\epsilon$ is a differentiable convex simple closed curve $S$ in the plane having an inscribed circle of radius $r_1>0$ and a circumscribed circle of radius $r_2$ such that the ratio $r_1/r_2$ is at least   $1-\epsilon$. 
Motivated by Richter and Rogers'  result, we construct a set $\Tczk$ 
such that (1) $\Tczk$ contains  all points of the plane as degenerate singleton circles and all of its non-singleton members are  differentiable convex simple closed planar curves;  (2)  $\Tczk$ with respect to the geometric convex hull operator is a locally convex geometry; (3) as opposed to $\Trr$,  $\Tczk$ is closed with respect to non-degenerate affine transformations; and 
(4) for every (small) positive $\epsilon\in\real $ and for every finite convex geometry, there are continuum many pairwise affine-disjoint finite subsets $E$ of $\Tczk$ such that each $E$ consists of almost-circles of accuracy $1-\epsilon$ and the convex geometry in question is  represented by restricting the convex hull operator to $E$. The affine-disjointness of $E_1$ and $E_2$ means that, in addition to $E_1\cap E_2=\emptyset$, even $\psi(E_1)$ is disjoint from $E_2$ for every non-degenerate affine transformation $\psi$.
\end{abstract}

\subjclass {Primary 05B25;  Secondary  06C10,   52A01}


\dedicatory{Dedicated to the eighty-fifth birthday of B\'ela Cs\'ak\'any}

\keywords{Abstract convex geometry, anti-exchange system, differentiable curve, almost-circle}

\date{\red{August 23, 2016}}

\maketitle

\section{Introduction}

For a set $E$, let $\Pow(E)=\set{X:X\subseteq E}$ and $\Pfin(E)=\{X: X\subseteq E$ and $X$ is finite$\}$ denote the \emph{powerset} and the \emph{set of finite subsets} of $E$, respectively.
Convex geometries are defined as follows. 

\begin{definition}[Adaricheva and Nation~\cite{kirajbbooksection} and \cite{kajbn}]\label{defconvgeo}
A pair $\tuple{E;\Phi}$ is a \emph{convex geometry}, also called an \emph{anti-exchange system}, if it satisfies the following properties:
\begin{enumeratei}
\item\label{defconvgeoa} $E$ is a set, called the set of \emph{points}, and $\Phi\colon \Pow(E)\to \Pow(E)$ is a \emph{closure operator}, that is, for all $X\subseteq Y\subseteq E$, we have $X\subseteq\Phi(X)\subseteq \Phi(Y)=\Phi(\Phi(Y))$.
\item\label{defconvgeob} If $A\in \Pow(E)$, $x,y\in E\setminus \Phi(A)$, and   $ \Phi(A\cup\set x) = \Phi(A\cup\set y)$, then $x=y$. (This is the so-called \emph{anti-exchange property}.)
\item\label{defconvgeoc}  $\Phi(\emptyset)=\emptyset$.
\end{enumeratei}
\end{definition}

Although local convexity is a known concept for topological vector spaces,  the following concept seems to be new.

\begin{definition}\label{defloconvgeo} A pair $\tuple{E;\Phi}$ is a \emph{locally convex geometry} if \ref{defconvgeo}\eqref{defconvgeoa}, \ref{defconvgeo}\eqref{defconvgeoc}, and
\begin{enumeratei}
\setcounter{enumi}{3}
\item\label{defloconvgeod} If $X\in \Pfin(E)$, $d,d'\in E\setminus \Phi(X)$, and   $ \Phi(X\cup\set d) = \Phi(X\cup\set {d'})$, then $d=d'$. 
\end{enumeratei}
(This condition will be called the \emph{local anti-exchange property}.)
\end{definition}

For example, if   $\rhullo n$ denotes the usual convex hull operator in the space $\real ^n$, 
\begin{equation}
\text{then $\tuple{\real ^n;\rhullo n}$ is a convex geometry.}
\label{eqtxtclScGoM}
\end{equation}
Every convex geometry is a locally convex geometry but \eqref{eqtxtlocnconvlPs}  we will soon show that the converse implication fails. 
Given a convex or a locally convex geometry $\tuple{E;\Phi}$ and a subset $E_0$ of $E$, we can consider the \emph{restriction}
\begin{equation}
\parbox{9cm}{$\restrict{\tuple{E;\Phi}}{E_0}:=\tuple{E_0;\Phi_0}$, where the map
$\Phi_0\colon \Pow(E_0)\to \Pow(E_0)$ is defined by the rule $\Phi_0(X):=E_0\cap \Phi(X)$,}
\end{equation}
of the original convex geometry to its subset, $E_0$. This terminology is justified by the following statement; without the trivial modification of adding ``locally'', it is taken from {Edelman and Jamison~\cite[Thm.\ 5.9]{edeljam}}.

\begin{lemma}[{Edelman and Jamison~\cite[Thm.\ 5.9]{edeljam}}]\label{lemmarestrgeom}
If   $\tuple{E;\Phi}$ is a convex or locally convex geometry, then so is its restriction $\restrict{\tuple{E;\Phi}}{E_0}$, for every subset $E_0$ of $E$.
\end{lemma}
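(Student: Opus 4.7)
The plan is to verify the three clauses of Definition~\ref{defconvgeo} (or the analogous clauses of Definition~\ref{defloconvgeo}) for $\tuple{E_0;\Phi_0}$. Clause \eqref{defconvgeoc} is immediate, since $\Phi_0(\emptyset)=E_0\cap\Phi(\emptyset)=E_0\cap\emptyset=\emptyset$. For the closure-operator clause \eqref{defconvgeoa}, extensivity and monotonicity are routine from the same properties of $\Phi$ together with $X\subseteq E_0$. For idempotence, one inclusion is extensivity; for the other, note that $\Phi_0(X)=E_0\cap\Phi(X)\subseteq\Phi(X)$, so monotonicity and idempotence of $\Phi$ yield $\Phi(\Phi_0(X))\subseteq\Phi(\Phi(X))=\Phi(X)$, and intersecting with $E_0$ gives $\Phi_0(\Phi_0(X))\subseteq\Phi_0(X)$.

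The only clause requiring a real argument is the (local) anti-exchange property. Let me do the local case, which subsumes the convex case by taking $X=A$ arbitrary; for the convex case one drops the finiteness assumption. Suppose $X\in\Pfin(E_0)$, $d,d'\in E_0\setminus\Phi_0(X)$, and $\Phi_0(X\cup\set d)=\Phi_0(X\cup\set {d'})$. Since $d,d'\in E_0$ and $d,d'\notin\Phi_0(X)=E_0\cap\Phi(X)$, we obtain $d,d'\notin\Phi(X)$. The key trick is extensivity: $d\in E_0\cap\Phi(X\cup\set d)=\Phi_0(X\cup\set d)=\Phi_0(X\cup\set{d'})\subseteq\Phi(X\cup\set{d'})$, and symmetrically $d'\in\Phi(X\cup\set d)$. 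Hence $X\cup\set{d'}\subseteq\Phi(X\cup\set d)$, so applying $\Phi$ and using idempotence gives $\Phi(X\cup\set{d'})\subseteq\Phi(X\cup\set d)$, and symmetrically the reverse inclusion, so $\Phi(X\cup\set d)=\Phi(X\cup\set{d'})$. Since $X$ is finite and $\tuple{E;\Phi}$ is locally convex (respectively, convex), the (local) anti-exchange property of $\Phi$ forces $d=d'$, as required.

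The only place where one might stumble is the anti-exchange step, because the hypothesis $\Phi_0(X\cup\set d)=\Phi_0(X\cup\set{d'})$ is an equation of sets intersected with $E_0$, not an equation in $E$; naively one cannot drop the intersection with $E_0$. The observation that saves the argument is that $d$ and $d'$ themselves lie in $E_0$, so they survive the intersection, which is exactly what is needed to transfer the equality to $\Phi$ via the standard ``each side contains the other's generator'' reasoning. No other subtlety appears, and the proof for convex geometries is the same verbatim with $\Pfin(E)$ replaced by $\Pow(E)$.
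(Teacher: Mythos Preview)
Your proof is correct and follows essentially the same approach as the paper's: both establish idempotence of $\Phi_0$ via $\Phi_0(X)\subseteq\Phi(X)$ and monotonicity/idempotence of $\Phi$, and both handle the (local) anti-exchange property by using $d\in\Phi_0(X\cup\set d)=\Phi_0(X\cup\set{d'})\subseteq\Phi(X\cup\set{d'})$ to lift the equality $\Phi_0(X\cup\set d)=\Phi_0(X\cup\set{d'})$ up to $\Phi(X\cup\set d)=\Phi(X\cup\set{d'})$ before invoking the anti-exchange property of $\Phi$. Your final explanatory paragraph nicely isolates the one point where the argument could go wrong, but otherwise the two proofs are interchangeable.
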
 

Since our setting is slightly different and the proof is very short, we will prove this lemma in Section~\ref{sectprtools} for the reader's convenience. Note that a finite locally convex geometry is automatically a convex geometry. As an additional justification of our terminology, we mention the following statement even if its proof, postponed to Section~\ref{sectprtools}, is trivial.

\begin{lemma}\label{lemmaTfCLoCc}
A pair  $\tuple{E;\Phi}$ of a set $E$ and a closure operator $\Phi\colon \Pow(E)\to \Pow(E)$ on $E$ is a locally convex geometry if and only if its restriction $\restrict{\tuple{E;\Phi}}{E_0}$ is a convex geometry for every \emph{finite} subset $E_0$ of $E$.
\end{lemma}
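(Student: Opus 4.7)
The plan is to prove both implications directly from the axioms; the argument is short but requires a small closure-operator trick in the forward direction.

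For the forward direction, I assume $\tuple{E;\Phi}$ is a locally convex geometry and fix a finite $E_0 \subseteq E$. Checking that $\Phi_0(X) := E_0 \cap \Phi(X)$ is a closure operator on $E_0$ is routine from the corresponding properties of $\Phi$. The normalization $\Phi_0(\emptyset) = \emptyset$ follows at once since $\Phi(\emptyset) = \emptyset$ by Definition~\ref{defconvgeo}\eqref{defconvgeoc} (which is part of the assumption in Definition~\ref{defloconvgeo}). The only nontrivial clause is anti-exchange. Suppose $A \subseteq E_0$, $x,y \in E_0 \setminus \Phi_0(A)$, and $\Phi_0(A \cup \set x) = \Phi_0(A \cup \set y)$. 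Since $A \subseteq E_0$, the set $A$ is finite, and $E_0 \setminus \Phi_0(A) = E_0 \setminus \Phi(A)$ gives $x,y \notin \Phi(A)$.

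The hypothesis on $\Phi_0$ alone does not force $\Phi(A \cup \set x) = \Phi(A \cup \set y)$; the trick is to notice that $y \in E_0 \cap \Phi(A \cup \set y) = \Phi_0(A \cup \set y) = \Phi_0(A \cup \set x) \subseteq \Phi(A \cup \set x)$, and symmetrically $x \in \Phi(A \cup \set y)$. Then the closure-operator properties give
\[
\Phi(A \cup \set{x,y}) \subseteq \Phi\bigl(\Phi(A \cup \set x)\bigr) = \Phi(A \cup \set x) \subseteq \Phi(A \cup \set{x,y}),
\]
so $\Phi(A \cup \set{x,y}) = \Phi(A \cup \set x)$, and symmetrically $\Phi(A \cup \set{x,y}) = \Phi(A \cup \set y)$. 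Therefore $\Phi(A \cup \set x) = \Phi(A \cup \set y)$, and since $A$ is finite the local anti-exchange property of $\tuple{E;\Phi}$ yields $x = y$, as required.

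For the reverse direction, I assume $\restrict{\tuple{E;\Phi}}{E_0}$ is a convex geometry for every finite $E_0 \subseteq E$. Applying this to singletons $E_0 = \set a$ forces $a \notin \Phi(\emptyset)$ for every $a \in E$, so $\Phi(\emptyset) = \emptyset$. To verify local anti-exchange, take any finite $X \in \Pfin(E)$ and $d,d' \in E \setminus \Phi(X)$ with $\Phi(X \cup \set d) = \Phi(X \cup \set{d'})$. Set $E_0 := X \cup \set{d,d'}$, which is finite. Intersecting the equal sets with $E_0$ gives $\Phi_0(X \cup \set d) = \Phi_0(X \cup \set{d'})$, while $d,d' \in E_0 \setminus \Phi_0(X)$. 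The hypothesis that $\restrict{\tuple{E;\Phi}}{E_0}$ is a convex geometry now delivers $d = d'$ via its anti-exchange property.

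The only place that requires thought is the equality $\Phi(A \cup \set x) = \Phi(A \cup \set y)$ in the forward direction; everything else is routine bookkeeping between $\Phi$ and $\Phi_0$.
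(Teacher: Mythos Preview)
Your proof is correct and follows essentially the same route as the paper. The forward direction reproduces the content of Lemma~\ref{lemmarestrgeom} (which the paper simply cites for this half), using the same key step of pulling $y\in\Phi(A\cup\set x)$ and $x\in\Phi(A\cup\set y)$ out of the $\Phi_0$-equality to force $\Phi(A\cup\set x)=\Phi(A\cup\set y)$; your detour through $\Phi(A\cup\set{x,y})$ is a harmless variant of the paper's direct inclusion argument. For the reverse direction you argue exactly as the paper does, restricting to $E_0=X\cup\set{d,d'}$; your explicit singleton check for $\Phi(\emptyset)=\emptyset$ is a nice touch where the paper just says ``clearly''.
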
 

%
%
%

\emph{Finite} convex geometries are intensively studied mathematical objects.  There are several  combinatorial and lattice theoretical ways to characterize and describe these objects; see, for example, Adaricheva and Cz\'edli~\cite{kaczg}, Avann~\cite{avann1},  Cz\'edli~\cite{czgcoord}, Dilworth~\cite{dilworth40}, Duquenne~\cite{duquenne}, Edelman and Jamison~\cite{edeljam}, and see also  Adaricheva and Nation~\cite{kirajbbooksection} and Monjardet~\cite{monjardet} for surveys.
Natural and easy-to-visualize examples for finite convex geometries are 
obtained by considering
the restrictions $\restrict{\tuple{\real ^n;\rhullo n}}{E}$ of $\tuple{\real ^n;\rhullo n}$ to finite sets $E\subseteq \real ^n$ of points, for $n\in\set{1,2,3,\dots}$. Note that most of the finite convex geometries are not isomorphic to any of these restrictions.
The first result that represents \emph{every} finite convex geometry with the help of $\tuple{\real ^n;\rhullo n}$ was proved in 
Kashiwabara,  Nakamura, and Okamoto~\cite{kashiwabaraatalshelling}. This result uses auxiliary points and has not much to do with 
restrictions in our sense, so we do not give further details on it.

Next, let $T$ be a set of  subsets of the plane. For $X\subseteq \Tany$, we can naturally define 
\begin{equation}
\begin{aligned}
\Pnt(X)&=\bigcup_{C\in X}C,\quad \text{ and} \cr
\Hull\Tany(X):&=\set{D\in \Tany: D\subseteq \rhullo2(\Pnt(X))}.
\end{aligned}
\label{eqconvTanydef}
\end{equation} 
The notation $\Pnt$ is self-explanatory; $\Pnt(X)$ is the set of points of the members of $X$.
Note the difference between the notations $\rhullo2$ and $\Hull\Tany$; the former applies to sets of \emph{points} and yields a set of points while the latter to sets of sets and yields a set of sets. Typically in the present paper, $\Tany$ consists of closed curves and  we apply  $\Hull\Tany$ to \emph{sets of closed curves}. 

In lucky cases but \emph{far from always}, the structure $\tuple{\Tany;\Hull\Tany}$ is a locally convex geometry. For example, if $\Tany =\real ^2$, then  $\tuple{\Tany;\Hull\Tany}= \tuple{\real ^2;\rhullo 2} $ is even a convex geometry.  In order to obtain a more interesting example, let $\Tellips$ be the set of all non-flat ellipses in the plane. Here, by a \emph{non-flat} ellipse we mean an ellipse that is either of positive area or it consists of a single point. We have the following observation.
\begin{equation}
\parbox{7cm}{$\tuple{\Tellips;\Hull\Tellips}$ is a locally convex geometry. However, it is not a convex geometry.}
\label{eqtxtlocnconvlPs}
\end{equation}
The first part of \eqref{eqtxtlocnconvlPs} follows from  Cz\'edli~\cite{czgcircles}, where the argument is formulated only for circles but it clearly holds for ellipses. This part will also follow easily from the present paper; see the proof of part \eqref{thmmainb} of Theorem~\ref{thmmain}. In order to see the second part, let $\mathbb Z$ stand for the set of integer numbers, and let
 $C_k=\set{\pair x y: (x-k)^2+y^2=1}$, $X=\set{\pair x y: x^2 + (y-3)^2=1}$, and $Y=\set{\pair x y: x^2 + (y-2)^2=4}$. Then the anti-exchange property, \ref{defconvgeo}\eqref{defconvgeob}, fails for $A=\set{C_k: k\in \mathbb Z}$, $X$, and $Y$. 

Related to \cite[(4.6)]{czgcircles}, it is an open problem 
\begin{equation}
\parbox{8cm}{whether every finite convex geometry can be represented in the form  $\restrict{\tuple{\Tellips;\Hull\Tellips}}E$;}
\label{eqtxtproblellips}
\end{equation}
up to isomorphism, of course. The answer is affirmative for finite convex geometries of convex dimension at most 2, to be defined later; the reason is that, denoting the set of all circles (including the singletons)  of the plane by   $\Tcircle$, \cite{czgcircles} proves that 
\begin{equation}
\parbox{8.5cm}{every finite convex geometry of convex dimension at most $2$ is isomorphic to some $\restrict{\tuple{\Tcircle;\Hull\Tcircle}}{E}$.}
\label{eqtxtcRls}
\end{equation}
Actually,  \cite{czgcircles} proves a bit more but the details are irrelevant here. 

We obtain from Richter and Rogers~\cite[Lemma 3]{richterrogers}, or in a straightforward way, that for every set $\Tany$ of pairwise vertex-disjoint planar convex polygons, $\tuple{\Tany;\Hull\Tany}$ is a locally convex geometry. Therefore, since there are only countably many isomorphism classes of finite convex geometries, \cite[Theorem]{richterrogers} implies that 
\begin{equation}
\parbox{9cm}{there exists a set $\Trr$ of pairwise vertex-disjoint convex polygons in the plane such that $\tuple{\Trr;\Hull\Trr}$ is a locally convex geometry and every finite convex geometry
can be represented as some of its restrictions,   $\restrict{\tuple{\Trr;\Hull\Trr}}{E}$.}
\label{eqtxtRRskTrr}
\end{equation}

\begin{remark}\label{remdSk} In this paper, the ``elements'' of our convex geometries are closed lines, mostly \emph{simple closed curves}; for example, they are  circles in \eqref{eqtxtcRls}. This setting is more natural here, since we will work with curves. However, it would be an equivalent setting to replace these ``elements'' by their convex hulls. For example, we could consider \emph{closed disks} in \eqref{eqtxtcRls} instead of circles, and similarly in \eqref{eqtxtproblellips}, \eqref{eqtxtRRskTrr}, and the forthcoming Theorem~\ref{thmmain}.  However, instead of doing so, we require only that our simple closed curves should be \emph{convex}, that is, they should coincide with the boundaries of their convex hulls.
\end{remark}

Every non-degenerate affine transformation $\psi$ of the plane, that is, every map $\psi\colon\real^2\to\real^2$ defined by $\pair xy \mapsto \pair xy A + \pair {b_1}{b_2}$, where $A$ is a 2-by-2 matrix with nonzero determinant, 
is known to induce an automorphism of the convex geometry $\tuple{\real ^2;\rhullo 2}$. Furthermore, if convex linear combinations are considered operations, then there are no more automorphisms, say, by Cz\'edli, Mar\'oti, and Romanowska~\cite[Theorem 2.4]{czmmaroman}.
Hence and because of Remark~\ref{remdSk}, it is a natural desire to replace $\Trr$ in \eqref{eqtxtRRskTrr} by a set of convex simple closed planar curves that is closed with respect to non-degenerate affine transformations. Note that $\Trr$ is not even closed with respect to parallel shifts. Actually, except from trivial cases, if $\Tany$ is a set of polygons closed with respect to parallel shifts, then $\tuple{\Tany,\Hull\Tany}$ is not a locally convex geometry in general; see Figure~\ref{figtrrnot} for an explanation.

\begin{figure}[ht] 
\centerline
{\includegraphics[scale=1.0]{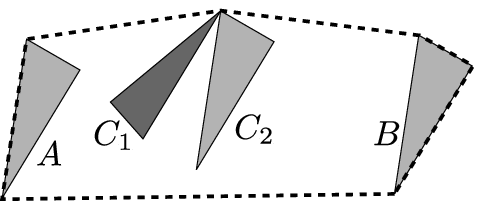}}
\caption{$\Hull\Tany(\set{A,B}\cup\set{C_1})=\Hull\Tany(\set{A,B}\cup\set{C_2})$,  $\set{C_1,C_2}\cap\Hull\Tany(\set{A,B})=\emptyset$, but $C_1\neq C_2$.}
\label{figtrrnot}
\end{figure}

Our goal is to replace $\Trr$ in 
\eqref{eqtxtRRskTrr} with a  set of differentiable convex simple closed planar curves that is closed with respect to non-degenerate affine transformations. Although $\Tellips$ is closed, \eqref{eqtxtproblellips} remains an open problem. On the other hand, Figure~\ref{figtrrnot} shows that we cannot replace $\Trr$ with a set of polygons in \eqref{eqtxtRRskTrr}. Therefore, we are going to modify the circles in \eqref{eqtxtcRls} slightly so that the restriction on the convex dimension could be removed. Of course, the non-degenerate affine transformations will bring ellipse-like closed curves in  besides the circle-like ones.  Our definition of a circle-like closed curve is the following one.

\begin{definition}\label{defAmstCrlE}
For a nonnegative real number  $\epsilon<1$ and a differentiable convex simple closed planar curve $S$, we say that $S$ is an \emph{almost-circle of accuracy} $1-\epsilon$ if  $S$ has an inscribed circle of radius $r_1>0$ and a circumscribed circle of radius $r_2$ such that the ratio $r_1/r_2$ is at least   $1-\epsilon$. 
\end{definition}

Our convention in the paper is that $0\leq \epsilon <1$, even if this will not be repeatedly mentioned. Following the tradition, we think that $\epsilon$ is very close to 0.  The case $\epsilon=0$ occurs only for non-degenerate circles, which are of  accuracy 1.
Note the following feature of our terminology: if $1-\epsilon' < 1-\epsilon$, that is,   $\epsilon'>\epsilon$,  then every  almost-circle of accuracy $1-\epsilon$ is also an almost-circle of accuracy $1-\epsilon'$.

\begin{definition} For $E_1,E_2\subseteq \Pow(\real)$, $E_1$ and $E_2$ are \emph{affine-disjoint} if for every $X_1\in E_1$ and every non-degenerate affine transformation $\psi\colon \real^2\to\real^2$, $\psi(X_1)\notin E_2$.
In other words,  $E_1$ and $E_2$ are \emph{affine-disjoint} if $\psi(E_1)\cap E_2=\emptyset$ for all  $\psi$ as above.
\end{definition}

Note that affine disjointness is a symmetric relation, since the inverse of $\psi$ above is also a non-degenerate affine transformation. 
Now, we are in the position to formulate the main result of the paper.

\begin{theorem}[Main Theorem]\label{thmmain}
There exists a set $\Tczk$ of some subsets of the plane, that is, $\Tczk\subseteq \Pow(\real^2)$, with the following properties.
\begin{enumeratei}
\item\label{thmmaina} Every non-singleton member of $\,\Tczk$ is a differentiable convex simple closed planar curve, and  for all $\vec p\in\real^2$, the singleton $\set{\vec p}$ belongs to $\Tczk$.
\item\label{thmmainb} $\tuple{\Tczk;\Hull\Tczk}$ is a locally convex geometry.
\item\label{thmmainc} $\Tczk$ is closed with respect to non-degenerate affine transformations.
\item\label{thmmaind} For every finite convex geometry $\tuple{E_0;\Phi_0}$ and for every (small) positive real number $\epsilon<1$, there exist continuum many pairwise affine-disjoint finite subsets $E$ of $\Tczk$ such that $\tuple{E_0;\Phi_0}$ is isomorphic to the restriction $\restrict{\tuple{\Tczk;\Hull\Tczk}}E = \tuple{E;\Hull E}$ and $E$ consists of non-degenerate almost-circles of accuracy $1-\epsilon$.
\end{enumeratei}
\end{theorem}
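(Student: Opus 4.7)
The plan is to build $\Tczk$ by starting from the Richter-Rogers polygonal representation of \eqref{eqtxtRRskTrr}, ``rounding'' each convex polygon into a differentiable almost-circle via small circular arcs at its corners, and then closing the resulting family under all non-degenerate affine transformations and adjoining every singleton $\set{\vec p}$ with $\vec p\in\real^2$. The guiding idea is that a differentiable strictly convex curve has a unique supporting line at every boundary point, which should exclude the polygonal pathology exhibited in Figure~\ref{figtrrnot}.

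The first technical step would be a rounding lemma: given any convex polygon $P$ and any Hausdorff tolerance $\delta>0$, one can construct a differentiable convex simple closed curve $S\subseteq P$ lying within Hausdorff distance $\delta$ of $\partial P$ and arrange that $S$ is an almost-circle of prescribed accuracy $1-\epsilon$; the accuracy is controlled by first scaling $P$ toward a nearly round shape, which is possible since the Richter-Rogers polygons may be taken arbitrarily small. Applying this rounding to a polygon representation $\restrict{\tuple{\Trr;\Hull\Trr}}{E_\mathrm{pol}}$ of a prescribed finite convex geometry $\tuple{E_0;\Phi_0}$, with $\delta$ smaller than the ``gap distances'' appearing in the finite combinatorial data of $E_\mathrm{pol}$, produces a finite family $E$ of almost-circles that still represents $\tuple{E_0;\Phi_0}$ under $\Hull\Tczk$.

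The main obstacle is the local anti-exchange property for $\tuple{\Tczk;\Hull\Tczk}$ after the affine closure and the union over all finite convex geometries. Suppose $X\in\Pfin(\Tczk)$ and $C,C'\in\Tczk\setminus\Hull\Tczk(X)$ satisfy $\Hull\Tczk(X\cup\set C)=\Hull\Tczk(X\cup\set{C'})$. The condition $C\notin\Hull\Tczk(X)$ forces some point of $C$ to lie strictly outside $\rhullo2(\Pnt(X))$, so $C$ contributes a nonempty relatively open arc $\gamma$ to $\partial\,\rhullo2(\Pnt(X)\cup C)$. The same boundary must also arise from $X\cup\set{C'}$, and along $\gamma$ the pointwise tangent data pick out which curve of $\Tczk$ can contribute $\gamma$; differentiability and strict convexity of almost-circles then force $C=C'$, provided $\Tczk$ has been built so that no two distinct members share a relatively open arc. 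I would therefore impose this ``no-shared-arc'' condition when selecting the almost-circles, choosing all free geometric parameters (corner radii, centers, orientations) generically---algebraically independent over $\mathbb Q$ from everything previously constructed---and observe that the property survives the affine closure because a non-degenerate affine image of a strictly convex differentiable curve is again one, and because such an affine image can agree with a previously-built generic curve on an arc only if a nontrivial algebraic relation among the parameters forces it to.

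For part~\eqref{thmmaind}, continuum many pairwise affine-disjoint representations of a single $\tuple{E_0;\Phi_0}$ are obtained by varying a single continuous parameter (for instance a small rotation angle $\theta$, or a scaling factor) through a continuum of algebraically independent real values. Since a non-degenerate affine transformation is determined by only finitely many real parameters, relating two of these representations by such a transformation would impose a nontrivial algebraic relation on the chosen independent values, which is excluded by construction. This yields the required continuum-many pairwise affine-disjoint subsets $E\subseteq\Tczk$, each consisting of almost-circles of accuracy at least $1-\epsilon$ and representing $\tuple{E_0;\Phi_0}$ via $\restrict{\tuple{\Tczk;\Hull\Tczk}}E$.
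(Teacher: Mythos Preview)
Your outline identifies the right skeleton for part~\eqref{thmmainb}: a curve $D\notin\Hull\Tczk(X)$ contributes a characteristic open arc to the boundary of $\rhullo2(\Pnt(X\cup\set D))$, and if no two distinct members of $\Tczk$ share an open arc, then $D=D'$ follows. This is precisely how the paper argues. The fatal gap lies in \emph{how} you propose to achieve the no-shared-arc condition after taking the affine closure.

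Rounding polygon corners with circular arcs and then closing under affine transformations cannot work. All circular arcs lie in a single orbit of the affine group (indeed of the similarity group), so for any two of your rounded curves $C_1,C_2$ there is a non-degenerate affine $\psi$ for which $\psi(C_1)$ and $C_2$ share an open arc; the same holds for the straight edge pieces. Choosing radii, centers, and orientations to be algebraically independent over $\mathbb Q$ does not help: the offending $\psi$ is selected \emph{after} the parameters are fixed and is simply built from them (e.g.\ the homothety of ratio $r_2/r_1$), so no new algebraic relation among the parameters is forced. Hence the premise on which your anti-exchange argument rests is false for your construction. The paper's central device is exactly a cure for this: an explicit one-parameter family $\Fczk=\set{f_\alpha:\alpha\in(0,1)}$ of degree-$7$ polynomial arcs that is \emph{affine-rigid} (Lemma~\ref{lemmaFczkmjt}, property~(F\ref{Ffe})), meaning $\psi_1(\graph{f_\alpha})$ and $\psi_2(\graph{f_\beta})$ share an open arc only when $\alpha=\beta$ and $\psi_1=\psi_2$. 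The almost-circles in $\Tczk$ are glued from affine copies of these $\graph{f_\alpha}$, and the parameters $\alpha\in(0,1)$ are allocated so that each value occurs in exactly one almost-circle at exactly one position (Remark~\ref{remuniqueije} and \eqref{eqtxtKlyNlTt}); this ``serial number'' survives every affine motion and lets one recover $D$ from any single open sub-arc.

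A secondary gap concerns accuracy. Making a Richter--Rogers polygon ``small'' does not make it ``round'': a tiny triangle with rounded corners still has inscribed-to-circumscribed ratio bounded well away from $1$, and affine maps do not improve this ratio for triangles. The paper obtains accuracy $1-\epsilon$ by a different route: it abandons the Richter--Rogers polygons entirely, represents the given finite convex geometry via the Edelman--Jamison linear orderings (Lemma~\ref{lemmaedJam}), and builds each almost-circle on a regular $mt$-gon template with a free multiplicity $m$ (Definition~\ref{defFlmCrl}, Lemma~\ref{lemmaFlHCr}); choosing $m$ large drives the accuracy to $1-\epsilon$, while the $t$ groups of $m$ arcs encode the $t$ orderings so that concentric almost-circles realize the convex geometry (Lemma~\ref{lemmagyrPrmTdD}). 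The continuum of pairwise affine-disjoint copies then comes for free from the disjointness of the parameter blocks $K_{\kappa,n,\vec o,m}$ together with affine rigidity, with no appeal to algebraic independence.
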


\begin{remark} Clearly, the restriction $\restrict{\tuple{\Tczk;\Hull\Tczk}}{\set{\set{\vec p}: \vec p\in\real^2}}$ is isomorphic to the classic $\tuple{\real^2;\rhullo2}$, see \eqref{eqtxtclScGoM}, since the map defined by $\set{\vec p}\mapsto \vec p$ is an isomorphism. Thus, we can view $\tuple{\Tczk;\Hull\Tczk}$ as an extension of the plane with many large ``unconventional points''.
\end{remark}

\begin{remark}\label{remconTnm} Necessarily, the cardinality of  $\Tczk$ in  Theorem~\ref{thmmain} is  continuum, that is, $2^{\aleph_0}$.  Furthermore, the theorem is sharp in the sense that 
neither  $|\Tczk|$, nor ``continuum many'' in part ~\eqref{thmmaind}  could be larger.  
\end{remark}

Having no reference at hand, we follow a well-known argument below.

\begin{proof}[Proof of Remark~\ref{remconTnm}]  
Take a differentiable curve $\set{\pair{x(t)}{y(t)}:t\in [a,b]}$. There are continuum many $a$ and $b$. Since every continuous function $[a,b]\to \real$ is determined by its restriction to $[a,b]\cap\mathbb Q$, there are continuum many such functions $x\colon [a,b]\to \mathbb R$, and the same holds for the functions $y\colon [a,b]\to \mathbb R$. Hence, $|\Tczk|$ is at most continuum, whereby the statement of the remark holds.
\end{proof}

The following statement shows that if we disregard the singletons, then a somewhat weaker statement can be formulated in a slightly simpler way.  Since the statement below follows from Theorem~\ref{thmmain} trivially, there will be no separate proof for it.

\begin{corollary}\label{corolhtmshdnB}
There exists a set $\Tczk$ of \emph{differentiable}  convex simple closed  planar curves satisfying \textup{\ref{thmmain}\eqref{thmmainb}}, \textup{\ref{thmmain}\eqref{thmmainc}}, and \textup{\ref{thmmain}\eqref{thmmaind}}. 
\end{corollary}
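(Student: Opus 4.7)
The plan is to build $\Tczk$ as $\{\{\vec p\}:\vec p\in\real^2\}\cup\mathcal C$, where $\mathcal C$ is a family of non-singleton differentiable convex simple closed planar curves that is closed under non-degenerate affine transformations of the plane and is rich enough to represent every finite convex geometry. Concretely, I would take $\mathcal C$ to be the orbit, under the full affine group, of a countable ``seed'' collection of almost-circles of arbitrarily high accuracy whose convex-hull interactions are as generic as possible. Part~\eqref{thmmaina} is then immediate, and \eqref{thmmainc} holds by construction, since non-degenerate affine maps are diffeomorphisms preserving convexity, simplicity, and differentiability.

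For \eqref{thmmainb}, I would invoke Lemma~\ref{lemmaTfCLoCc} to reduce the claim to checking the anti-exchange property on every finite restriction. Given a finite $E_0\subseteq \Tczk$, a set $A\subseteq E_0$, and distinct $D_1,D_2\in E_0\setminus\Hull{\Tczk}(A)$ with $\Hull{\Tczk}(A\cup\{D_1\})=\Hull{\Tczk}(A\cup\{D_2\})$, each $D_i$ is contained in $\rhullo{2}(\Pnt(A)\cup D_{3-i})$. Picking an extreme point of $\rhullo{2}(\Pnt(A)\cup D_1)$ that lies on $D_1\setminus\rhullo{2}(\Pnt(A))$, the same point must lie on $D_2$, and smoothness forces the supporting tangent line of $D_2$ there to agree with that of $D_1$. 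A convex-analytic rigidity argument in the spirit of \cite{czgcircles} then propagates agreement of the two curves and forces $D_1=D_2$.

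For \eqref{thmmaind}, I would start from the Richter--Rogers polygon representation \eqref{eqtxtRRskTrr} to obtain a finite set $E_{\textup{pol}}$ of pairwise vertex-disjoint convex polygons representing $\tuple{E_0;\Phi_0}$. After an affine normalization that makes each polygon ``fat'' (inscribed-to-circumscribed circle ratio close to $1$), I would replace each $P\in E_{\textup{pol}}$ by a curve $\sigma(P)$ obtained by rounding its corners with small circular arcs whose radii are small enough to preserve almost-circle accuracy $1-\epsilon$ and large enough to ensure $C^1$ smoothness. The key verification is that $\sigma$ preserves every convex-hull incidence, i.e., $\sigma(P)\subseteq\rhullo{2}(\Pnt(\sigma(X)))$ iff the analogous containment holds for $P$ and $X$; this reduces to a quantitative comparison between the outward bulge of the rounded corners and the separation between polygons in the underlying construction. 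Finally, to obtain continuum many pairwise affine-disjoint copies, I would vary a single continuous parameter of the construction (for instance, the position along a line of one fixed curve), and since the affine group is six-dimensional a standard cardinality argument then extracts continuum many pairwise affine-inequivalent finite subsets $E$.

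The main obstacle is \eqref{thmmaind}. Almost-circle accuracy $1-\epsilon$ is a genuine constraint on the overall \emph{shape} of each $\sigma(P)$, not merely a smoothness condition, so the rounding must be balanced carefully against the shape of the underlying polygon; this likely necessitates a preliminary reworking of the Richter--Rogers construction so that the polygons are already close to regular. Ruling out unintended convex-hull incidences among the newly added boundary arcs of different $\sigma(P)$'s further requires a uniform quantitative separation estimate among the polygons — it is this interplay between geometric roundness and combinatorial fidelity that forms the technical heart of the theorem.
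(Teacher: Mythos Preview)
The paper gives no separate proof of this corollary: it is stated as an immediate consequence of Theorem~\ref{thmmain}. Concretely, take the $\Tczk$ produced by Theorem~\ref{thmmain}, discard the singletons, and note that \eqref{thmmainc} and \eqref{thmmaind} survive verbatim (the representing sets $E$ in \eqref{thmmaind} already consist of non-degenerate almost-circles), while \eqref{thmmainb} follows from Lemma~\ref{lemmarestrgeom}. Your proposal instead attempts to reprove Theorem~\ref{thmmain} from scratch---you even include the singletons and verify part~\eqref{thmmaina}, which the corollary deliberately omits. That is far more than is asked.

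Viewed as an attempt at Theorem~\ref{thmmain} itself, your argument for \eqref{thmmainb} has a genuine gap. You claim that once $D_1$ and $D_2$ share an extreme point of $\rhullo2(\Pnt(A\cup\{D_1\}))$ and the tangent there, ``a convex-analytic rigidity argument in the spirit of \cite{czgcircles} propagates agreement \dots\ and forces $D_1=D_2$.'' That works in \cite{czgcircles} because the curves there are circles, and a circle is determined by any open arc. For arbitrary differentiable convex curves no such rigidity holds: two distinct $C^1$ convex closed curves can share a common arc. Your description of $\mathcal C$ (``orbit under the affine group of a seed collection whose convex-hull interactions are as generic as possible'') gives no mechanism to prevent this. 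The paper's substitute is precisely the notion of an \emph{affine-rigid} family of functions (Lemma~\ref{lemmaFczkmjt}, property~(F\ref{Ffe})): the building blocks $f_\alpha$ are chosen so that if any two affine images of their graphs share an open arc, then the functions and the affine maps coincide. This is the technical core that makes \eqref{thmmainb} go through, and it is absent from your sketch.

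Your route to \eqref{thmmaind} is also different from the paper's. You propose to round the corners of Richter--Rogers polygons; the paper instead uses the Edelman--Jamison description by linear orderings (Lemma~\ref{lemmaedJam}) and builds \emph{concentric} almost-circles whose $\pair ij$-th arcs are nested according to those orderings (Definition~\ref{defCtRsnL}, Lemma~\ref{lemmagyrPrmTdD}). The paper's approach sidesteps exactly the difficulty you flag---making the polygons nearly regular while preserving all convex-hull incidences---because the representing curves are near-circles by construction rather than by post-processing. Moreover, in your scheme the rounded polygons must also lie in $\mathcal C$ and satisfy the (unspecified) rigidity needed for \eqref{thmmainb}; reconciling ``circular arcs at corners'' with any affine-rigidity requirement is an additional, unaddressed obstacle.
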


\section{Affine-rigid functions}
As usual, for $S\subseteq \real$ and a real function  $f\colon S\to \real$, the \emph{domain} $S$ of $f$ is denoted by $\dom f$. The \emph{graph} of $f$ is denoted by
\[\graph f:=\set{\pair x{f(x)}: x\in\dom f}.
\]
A \emph{proper interval} of $\real$ is an interval of the form $[a,b]$, $(a,b]$, $[a,b)$, or $(a,b)$ such that $a<b\in\real\cup\set{-\infty,\infty}$. 
Even if this is not repeated all the times, this paper assumes that $\dom f$ of an arbitrary real function is a proper interval and that $f$ is differentiable on $\dom f$. 
(If $\dom f=[a,b)$ or $\dom f=[a,b]$, then the differentiability of $f$ at $a$ is understood from the right, and similarly for $b$.) As a consequence of our assumption, $\graph f$ is a smooth curve. For an affine transformation $\psi\colon \real ^2\to \real ^2$, the $\psi$-image of $\graph f$ is, of course, $\set{\psi(\pair x{f(x)}): x\in\dom f}$. 
By an (open) \emph{arc} of a curve we mean a part of the curve (strictly) between two given points of it. Note that in degenerate cases, an arc can be a  \emph{straight line segment}; this possibility will not occur for the members of $\Tczk$.

\begin{definition} A set $G$ of real functions is said to be \emph{affine-rigid} if 
whenever $g_1$ and $g_2$ belong to $G$, $\psi_1\colon \real ^2\to \real ^2$ and
$\psi_2\colon \real ^2\to \real ^2$ are \emph{non-degenerate} affine transformations, and the
curves $\psi_1(\graph{g_1})$ and  $\psi_2(\graph{g_2})$ have a (small) nonempty open arc in common,
then $g_1=g_2$ and $\psi_1=\psi_2$.
\end{definition}

Note that rigidity (with respect to a given family of maps) is a frequently studied concept in various fields of mathematics; we mention only  \cite{czgrigid} and \cite{czgdjsrigid} from 2016, when the present paper was submitted.
As usual, $\Nnull$ stands for the set $\set{0,1,2,\dots}$ of non-negative integers and $\Nat=\Nnull\setminus\set 0$. 

\begin{remark} Affine-rigidity is a strong assumption even on a singleton set $\set{g}$. 
For example, if $n\in\Nat$ and we define $g\colon \real\to \real$ by $g(x)=x^n$, then $\set{g}$ fails to be affine-rigid.
In order to see this, let $g_1=g_2=g$, let $\psi_1$ be the identity map, and let 
$\psi_2\colon\real\to\real$ be defined by $\pair x y\mapsto \pair {cx}{c^{n}y}$, where $c\in\real\setminus\set 0$ is a constant.
Then  $\psi_1(\graph{g_1}) = \psi_2(\graph{g_2})$ but $\psi_1\neq \psi_2$.
\end{remark}

The following lemma is easy but it will be important for us.

\begin{lemma}\label{lemmalgnDfFcrFf} A set $G$ of real functions is affine-rigid if and only if for all $g_1,g_2\in G$ and for every non-degenerate affine transformation $\psi:\real^2\to\real^2$, if $\psi(\graph{g_1})$ and $\graph{g_2}$ have a  nonempty open arc in common,
then $g_1=g_2$ and $\psi$ is the identity map.
\end{lemma}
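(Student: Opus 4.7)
The plan is to verify the two directions by the obvious ``normalisation'' trick: one direction is immediate by specialising $\psi_2$ to the identity, and the other direction is obtained by composing with $\psi_2^{-1}$.

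For the ``only if'' direction, I would assume $G$ is affine-rigid and take $g_1,g_2\in G$ together with a non-degenerate affine $\psi$ such that $\psi(\graph{g_1})$ and $\graph{g_2}$ share a nonempty open arc. Since the identity map $\id\colon\real^2\to\real^2$ is a non-degenerate affine transformation and $\id(\graph{g_2})=\graph{g_2}$, I can apply the definition of affine-rigidity with $\psi_1:=\psi$ and $\psi_2:=\id$. This yields $g_1=g_2$ and $\psi=\id$, as required.

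For the ``if'' direction, I would assume the condition of the lemma and take $g_1,g_2\in G$ together with non-degenerate affine transformations $\psi_1,\psi_2\colon \real^2\to \real^2$ such that $\psi_1(\graph{g_1})$ and $\psi_2(\graph{g_2})$ have a nonempty open arc $\gamma$ in common. Since $\psi_2$ is a non-degenerate affine bijection of $\real^2$, its inverse $\psi_2^{-1}$ is also a non-degenerate affine transformation; in particular, $\psi_2^{-1}$ is a diffeomorphism and maps the nonempty open arc $\gamma$ onto a nonempty open arc $\psi_2^{-1}(\gamma)$. Setting $\psi:=\psi_2^{-1}\circ\psi_1$ (again non-degenerate affine), we see that $\psi_2^{-1}(\gamma)$ lies in both $\psi(\graph{g_1})$ and $\graph{g_2}$, so these two curves share a nonempty open arc. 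The hypothesis of the lemma then gives $g_1=g_2$ and $\psi=\id$, whence $\psi_1=\psi_2\circ\id=\psi_2$. This establishes affine-rigidity of $G$.

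There is essentially no obstacle here; the only point worth mentioning is the routine verification that a non-degenerate affine transformation, being a diffeomorphism of $\real^2$, sends nonempty open arcs to nonempty open arcs, so that the ``common arc'' property is transported correctly under $\psi_2^{-1}$.
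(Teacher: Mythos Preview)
Your proof is correct and follows essentially the same approach as the paper: specialise $\psi_2$ to the identity for the forward direction, and compose with $\psi_2^{-1}$ for the converse. The only difference is that you spell out the routine fact that $\psi_2^{-1}$ carries the common open arc to a common open arc, which the paper leaves implicit.
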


\begin{proof} First, assume that $G$ is affine-rigid. Letting $\psi_2$ be the identity transformation and applying the definition of affine-rigidity, it follows that the condition given in the lemma holds. 

Second, assume that $G$ satisfies the condition given in the lemma. Let $g_1,g_2\in G$, and 
let $\psi_1$ and $\psi_2$ be non-degenerate affine transformations such that  $\psi_1(\graph{g_1})$ and $\psi_2(\graph{g_2})$ have a nonempty open arc  in common. Composing maps from right to left, let $\psi=\psi_2^{-1}\circ \psi_1$. That is, for every point $\vec p\in\real^2$, $\psi(\vec p)= \psi_2^{-1}(\psi_1(\vec p))$. Clearly,  $\psi(\graph{g_1})$ and $\graph{g_2}$ have a nonempty open  arc  in common. By the assumption, $g_1=g_2$ and $\psi=\id_{\real^2}$, the identity transformation on $\real^2$. The second equality gives that $\psi_1=\psi_2$, proving the affine-rigidity of $G$.
\end{proof}

Next, we define the following polynomials and consider them functions $[0,1]\to\real$:
\begin{align}
 \pegy (x)&=x(1-x)(x^5-x^4+1)=-x^7+2x^6-x^5-x^2+x,\cr
 \pnul (x)&=x(1-x)=-x^2+x,\text{ and, for }\alpha\in(0,1),\cr
\fa(x)&=\alpha \pegy (x)+(1-\alpha)\pnul (x)=-\alpha x^7+2\alpha x^6-\alpha x^5-x^2+x.
\label{eqfalphdF}
\end{align}

\begin{lemma}\label{lemmaFczkmjt} The set $\Fczk=\set{f_\alpha: \alpha\in(0,1)}$ of $[0,1]\to\real$ functions has the following properties.
\begin{enumerate}[\quad\upshape (F1)]
\item\label{Ffa} For all $\alpha\in(0,1)$,  $f_\alpha$ is twice differentiable on $(0,1)$, and it is differentiable at $0$ and $1$  from right and left, respectively. 
\item\label{Ffb} For all $\alpha\in(0,1)$ and $x\in(0,1)$, $f_\alpha''(x)<0$; note that this condition and \textup{(F\ref{Ffa})} imply that $f_\alpha $ is strictly concave on $[0,1]$.
\item\label{Ffc} For all  $\alpha\in(0,1)$ , we have $f_\alpha(0)=f_\alpha(1)=0$, $f'_\alpha(0)=1$, and $f'_\alpha(1)=-1$.
\item\label{Ffd} For all  $0<\alpha<\beta<1$ and $x\in(0,1)$, we have that $f_\alpha (x)> \fb(x)$.
\item\label{Ffe} $\Fczk$ is an affine-rigid set of functions.
\item\label{Fff} For all $\alpha\in (0,1)$ and  $x\in(0,1)$, we have  that $0<f_\alpha (x)<1/2-|x-1/2|$. 
\end{enumerate} 
\end{lemma}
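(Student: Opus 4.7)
My plan is to handle the five ``routine'' properties \eqref{Ffa}, \eqref{Ffb}, \eqref{Ffc}, \eqref{Ffd}, and \eqref{Fff} by direct polynomial computation, and to concentrate the real effort on the affine-rigidity claim \eqref{Ffe}.

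Since $\fa$ is a polynomial, \eqref{Ffa} is automatic, and the values required by \eqref{Ffc} can be read off from \eqref{eqfalphdF}. I will also set up the closed form
\[
\fa(x) \;=\; \pnul(x) + \alpha\bigl(\pegy(x)-\pnul(x)\bigr) \;=\; x(1-x)\bigl[\,1 - \alpha x^4(1-x)\,\bigr],
\]
using the factorisation $\pegy(x)-\pnul(x) = -x^5(x-1)^2$. Once this is in hand, \eqref{Ffd} is immediate, since $\fa(x) - \fb(x) = (\beta-\alpha)\,x^5(1-x)^2$ is positive on $(0,1)$ whenever $\alpha < \beta$. For \eqref{Ffb} I will compute $\fa''(x) = -2 - \alpha h(x)$ with $h(x) := 42x^5 - 60x^4 + 20x^3$; the interior critical points of $h$ on $[0,1]$ are the roots $(4\pm\sqrt{2})/7$ of $7x^2-8x+2$, and evaluating $h$ there and at the endpoints bounds $|h|$ on $[0,1]$ by a constant strictly less than $2$. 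Together with $\alpha < 1$ this yields $\fa''(x) < 0$, so $\fa$ is strictly concave on $[0,1]$. Property \eqref{Fff} then follows at once: concavity plus the boundary values in \eqref{Ffc} give $\fa(x) > 0$ on $(0,1)$, while the closed form above yields $\fa(x) < x(1-x) < \min(x,1-x) = 1/2 - |x - 1/2|$, since $1 - \alpha x^4(1-x) < 1$ and both $x < 1$ and $1-x < 1$ on $(0,1)$.

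The main obstacle is \eqref{Ffe}. By Lemma~\ref{lemmalgnDfFcrFf}, it suffices to show that if $\psi\colon\real^2\to\real^2$ is non-degenerate with $\psi(x,y) = (ax+by+e,\,cx+dy+g)$ and $\psi(\graph{\fa})$ shares a nonempty open arc with $\graph{\fb}$, then $\alpha = \beta$ and $\psi = \id_{\real^2}$. Parametrising the shared arc by the $x$-coordinate of its preimage in $\graph{\fa}$ produces
\[
\fb\bigl(ax + b\fa(x) + e\bigr) \;=\; cx + d\fa(x) + g
\]
on some open interval, and since both sides are polynomials in $x$, the equality extends to all of $\real$. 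A degree count immediately forces $b = 0$: with $b \neq 0$ the left-hand side would have degree $49$, while the right-hand side has degree at most $7$.

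With $b = 0$ (and hence $a,d \neq 0$ by non-degeneracy), I will match the coefficients of $x^7$, $x^6$, $x^5$ on both sides of $\fb(ax+e) = cx + d\fa(x) + g$; the resulting small system has only the solutions $(a,e) \in \{(1,0),\,(-1,4/7)\}$. The $x^4$-coefficient of the right-hand side is zero because $\fa$ has no $x^4$ term, and the corresponding coefficient of $\fb(ax+e)$ rules out $(a,e) = (-1,4/7)$, leaving $a=1$ and $e=0$. Substituting back and matching the coefficients of $x^2$, $x^1$, and $x^0$ then forces $\alpha = \beta$ (hence $d = 1$), $c = 0$, and $g = 0$, so $\psi = \id_{\real^2}$. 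The reason this rigidity scheme works is that the ``asymmetric'' summand $-x^2 + x$ of $\fa$ is not scaled by $\alpha$, which ultimately pins down $\alpha = \beta$, while the vanishing of the $x^3$ and $x^4$ coefficients of $\fa$ leaves no room for any nontrivial affine reparametrisation.
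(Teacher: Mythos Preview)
Your proof is correct, with one small slip: in your treatment of (F\ref{Ffb}) you claim that $|h|$ is bounded on $[0,1]$ by a constant strictly less than $2$, but in fact $h(1)=42-60+20=2$. This does no damage, since (F\ref{Ffb}) only concerns $x\in(0,1)$; more to the point, what your argument actually needs is $h(x)>-2$ on $(0,1)$, and your critical-point analysis gives the much stronger bound $h(x)\gtrsim -0.6$. Combined with $\alpha<1$, this yields $f_\alpha''(x)=-2-\alpha h(x)\le -2+2\alpha<0$, so the conclusion stands. Just tidy up the statement of the bound.

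Your route differs from the paper's in several technical choices, and in each case your version is at least as clean. For (F\ref{Ffb}) the paper works with $\pegy''$ directly, writing $\pegy''(x)=(a(x)-2)-2x^5-20x^3(x-1)^2$ where $a(x)=-20x^5+20x^4$ and showing $a(x)<2$ via its maximum at $x=4/5$; your direct bound on $h$ via the roots of $7x^2-8x+2$ is equivalent but avoids the ad hoc decomposition. For (F\ref{Ffe}) the overall scheme (degree count forces $b=0$, then coefficient matching) is the same, but the paper uses the vanishing of the $x^4$ and $x^3$ coefficients on the right-hand side to obtain two equations $35e^2-30e+5=0$ and $35e^2-40e+10=0$ whose only common solution is $e=0$; you instead match $x^7,x^6,x^5$ to pin down $(a,e)\in\{(1,0),(-1,4/7)\}$ and then kill the second candidate with the $x^4$ coefficient. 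Both work; yours makes the leading-order constraints do more of the labour. For (F\ref{Fff}) the paper argues the upper bound via tangent lines and concavity, whereas your factorisation $f_\alpha(x)=x(1-x)\bigl[1-\alpha x^4(1-x)\bigr]<x(1-x)<\min(x,1-x)$ is shorter and more transparent.
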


In the notation $\Fczk$,  the superscript comes from ``good functions''.
 It is only the properties (F\ref{Ffa})--(F\ref{Fff}) of $\Fczk$ that we will need. Certainly, many sets of functions  parameterized with  $\alpha\in(0,1)$ have these properties; we have chosen our $\Fczk$ because of its simplicity. Note that (F\ref{Fff}), whose only role is to explain the connection of $f_\alpha $ to the triangle $\Delta(U,V,W)$ in Figure~\ref{figfunc}, is a consequence of (F\ref{Ffa})--(F\ref{Ffc}); this implication will be explained in the proof. Note also that  $|\fa(x)-\fb(x)|$ is small. Hence, in order to make our figures more informative, the graphs of the $\fa\in \Fczk$ are not depicted precisely. However, (F\ref{Ffa})--(F\ref{Ffd}) and (F\ref{Fff}) are faithfully shown by the figures.

\begin{proof}[Proof of Lemma~\ref{lemmaFczkmjt}] (F\ref{Ffa}) is trivial, since the functions $\fa$ are polynomial functions. Consider the auxiliary function $a(x):=-20x^5+20x^4$. Using that $4/5$ is the only root of $a'(x)$ in $(0,1)$, it is routine to see that $a(x)$ takes its maximum on $[0,1]$ at $x=4/5$ and this maximum is $a(4/5)<2$. (Actually, $a(4/5)=1.6384$ but we do not need the exact value.) Hence, for all $x\in(0,1)$,
$a(x)-2<0$, whereby
\begin{align*}
\pegy''(x)&=-42x^5+60x^4-20x^3-2\cr
 &= -20x^5+20x^4 -2 - 2 x^5 - (20x^5 -40 x^4 + 20x^3) \cr
&= (a(x)-2) - 2 x^5 - 20x^3(x-1)^2<0.
\end{align*}
Also, $\pnul''(x)=-2$ is negative. Thus, $\fa''(x)=\alpha\pegy''(x)+(1-\alpha)\pnul''(x) <0$ for $x\in(0,1)$, proving (F\ref{Ffb}). From $\pegy'(0)=\pnul'(0)=1$ and $\pegy'(1)=\pnul'(1)=-1$, we conclude (F\ref{Ffc}). Observe that, for $x\in(0,1)$, $\pnul(x)-\pegy(x)= x^7-2x^6-x^5=x^5(x-1)^2>0$.  Hence, for $0<\alpha<\beta<1$ and $x\in(0,1)$,
\begin{align*}
\fa(x)-\fb(x)&=\alpha\pegy(x)+(1-\alpha)\pnul(x) - \beta\pegy(x)-(1-\beta)\pnul(x)\cr
&=(\beta-\alpha)(\pnul(x)-\pegy(x))>0,
\end{align*}
which proves (F\ref{Ffd}).

Next, in order to prove (F\ref{Ffe}), it  suffices to verify the condition given in Lemma~\ref{lemmalgnDfFcrFf}. In order to do so, let $\psi\colon\real^2\to\real^2$ be a non-degenerate affine transformation,
and assume that $\fa,\fb\in\Fczk$ such that $\psi(\graph{\fa})$ and $\graph{\fb}$ have a nonempty open  arc  in common. 
 It is well-known that $\psi$ is given by the following rule
\begin{equation}
\pair \xi\eta\mapsto \pair \xi\eta A + \pair{b_1}{b_2}\text{, where }A=
\begin{pmatrix}
a_{11}&a_{12}\cr a_{21}&a_{22}
\end{pmatrix}\text{ and }\det(A)\neq 0.
\end{equation}
Hence, by our assumption, the curve \begin{align*} 
\psi(\graph{\fa }) &=\set{\psi(\pair x{\fa (x)}):x\in[0,1] } \cr
&= \set{\pair{a_{11}x+a_{21}\fa (x)+b_1} {a_{12}x+a_{22}\fa (x)+b_2}:x\in [0,1]}
\end{align*}
has a nonempty open  arc that lies on the graph of $\fb $. Thus, 
\begin{equation}
\parbox{9.2cm}
{$\hhb(x):=\fb (a_{11}x+a_{21}\fa (x)+b_1)\,$  and\\ 
$\hha(x):=a_{12}x+a_{22}\fa (x)+b_2\,$ are the same polynomials,}
\label{eqtxtbnhTmBxzW}
\end{equation}
because they agree for infinitely many values of $x$. We have that $a_{21}=0$, since otherwise $\hhb$ and $\hha$ would be of degree 49 and degree at most 7, respectively, and this would contradict \eqref{eqtxtbnhTmBxzW}. Since the  coefficients of $x^4$ and $x^3$ in $\hha$ are zero, the same holds in $\hhb$. 
Hence, \eqref{eqfalphdF}, with $\beta$ instead of $\alpha$, and \eqref{eqtxtbnhTmBxzW} yield that
\begin{align}
-\beta\binom{7}{4}a_{11}^4 b_1^3 &+2\beta\binom{6}{4}a_{11}^4b_1^2-\beta\binom{5}{4}a_{11}^4 b_1 \cr
&= -\beta a_{11}^4 b_1(35b_1^2 -30 b_1+5)=0,\text{ and}\label{eqghetbAa} \\
-\beta\binom{7}{3}a_{11}^3 b_1^4 &+2\beta\binom{6}{3}a_{11}^3b_1^3-\beta\binom{5}{3}a_{11}^3 b_1^2 \cr
&= -\beta a_{11}^3 b_1^2(35b_1^2 -40 b_1+10)=0.\label{eqghetbAb}
\end{align}
Since $0\neq \det(A)=a_{11}a_{22}- a_{12}\cdot 0$, none of $a_{11}$ and $a_{22}$ is zero. Neither is $\beta\in(0,1)$. 
In order to show that $b_1=0$, suppose the contrary. By \eqref{eqghetbAa} and \eqref{eqghetbAb}, $35b_1^2 -30 b_1+5=0$ and $(35b_1^2 -30 b_1+5)-(35b_1^2 -40 b_1+10)=10b_1-5=0$. The last equality gives that $b_1=1/2$, which contradicts the first equality. This proves that $b_1=0$. Hence, the constant term in $\hhb$ is 0. Comparing the constant terms in $\hha$ and $\hhb$, we obtain that $b_2=0$. Now, \eqref{eqfalphdF} turns \eqref{eqtxtbnhTmBxzW} into
\begin{align*}
-\beta a_{11}^7x^7&+2\beta a_{11}^6x^6-\beta a_{11}^5 x^5-a_{11}^2x^2+a_{11}x\cr
&= - a_{22}\alpha x^7+2 a_{22}\alpha  x^6-\alpha  a_{22}x^5- a_{22}x^2+ (a_{12}+a_{22})x.
\end{align*}
Comparing the first two terms, we obtain that $\beta a_{11}^7 = a_{22}\alpha = \beta a_{11}^6$. Since $a_{11}\neq 0\neq\beta$, we conclude that $a_{11}=1$. Since the coefficients of $x^2$ are equal, $a_{22}=1$. Finally, the coefficients of $x$ yield that $a_{12}=a_{11}-a_{22}=0$. By the equalities we have obtained, $\psi$ is the identity map, as required. This proves (F\ref{Ffe}).

Finally, we show that the conjunction of (F\ref{Ffa}), (F\ref{Ffb}), and (F\ref{Ffc}) implies (F\ref{Fff}). By (F\ref{Ffc}), the line through $U$ and $V$, denoted by $\ell(U,V)$, and the line
$\ell(V,W)$ are tangent to the graph of $f_\alpha $; see Figure~\ref{figfunc}. Since $f_\alpha $ is concave on $[0,1]$ by (F\ref{Ffa}) and (F\ref{Ffb}), its graph is below these two (and all other) tangent lines. This yields that $f_\alpha (x)<1/2-|x-1/2|$ for all $x\in (0,1)$. 
Next, let $x_0\in(0,1)$. Since $f'_\alpha (0)=1$, there exists an $x_1\in(0,x_0)$ such that $f_\alpha (x_1)>0$. Similarly, $f'_\alpha (1)=-1$ yields an $x_2\in(x_0,1)$ such that $f_\alpha (x_2)>0$. Since $f_\alpha $ is concave on $[x_1,x_2]$, its graph is above the secant through $\pair{x_1}{f_\alpha (x_1)}$ and $\pair{x_2}{f_\alpha (x_2)}$. Thus, $0<f_\alpha (x_0)$ and  (F\ref{Fff}) holds. This completes the proof  of Lemma~\ref{lemmaFczkmjt}.
\end{proof}

\begin{figure}[ht] 
\centerline
{\includegraphics[scale=1.0]{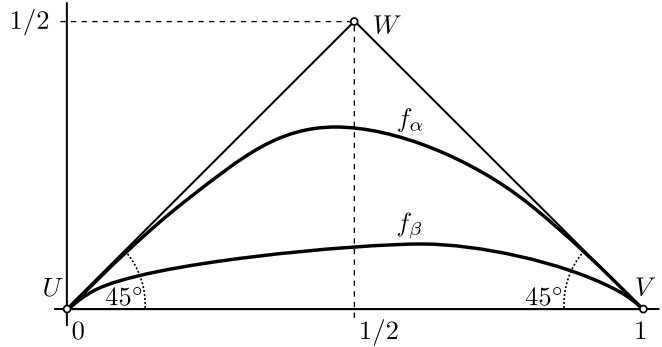}}
\caption{$\fa$ and $\fb$ for $0<\alpha<\beta<1$}  \label{figfunc}
\end{figure}

\section{Proofs and further tools}\label{sectprtools}
\subsection{More about finite convex geometries}

\begin{proof}[Proof of Lemma~\ref{lemmarestrgeom}]
For $X\subseteq E_0$, we have that
\begin{align*}
\Phi_0(\Phi_0(X))&=\Phi(\Phi(X)\cap E_0)\cap E_0\subseteq \Phi(\Phi(X))\cap E_0\cr
&= \Phi(X)\cap E_0 =\Phi_0(X).
\end{align*}
Hence, it is straightforward to see that  $\Phi_0$ satisfies Definition~\ref{defconvgeo}\eqref{defconvgeoa} and \eqref{defconvgeoc}; it suffices to deal only with \ref{defconvgeo}\eqref{defconvgeob}. In order to do so, assume that $A\in \Pow(E_0)$, or $A\in \Pfin(E_0)$,  and  let $x,y\in E_0\setminus \Phi_0(A)$ such that $\Phi_0(A\cup\set x)= \Phi_0(A\cup\set y)$. Since $x\in \Phi_0(A\cup\set x) =  \Phi_0(A\cup\set y)\subseteq  \Phi(A\cup\set y)$,  $A\subseteq  \Phi(A\cup\set y)$, and $\Phi$ is a closure operator, $\Phi(A\cup\set x)\subseteq  \Phi(\Phi(A\cup\set y))= \Phi(A\cup\set y)$. Similarly, $\Phi(A\cup\set y)\subseteq   \Phi(A\cup\set x)$, that is, $\Phi(A\cup\set y) = \Phi(A\cup\set x)$. Clearly, $x,y\notin \Phi(A)$.
Applying ~\ref{defconvgeo}\eqref{defconvgeob} to $\Phi$, it follows that $x=y$, as required.
\end{proof}

\begin{proof}[Proof of Lemma~\ref{lemmaTfCLoCc}] The ``only if'' part is included in Lemma~\ref{lemmarestrgeom}. In order to show the ``if'' part, assume that all  finite restrictions of $\tuple{E;\Phi}$  are convex geometries. By the assumptions of the lemma, 
$\tuple{E;\Phi}$ satisfies \ref{defconvgeo}\eqref{defconvgeoa}. Clearly, it also satisfies \ref{defconvgeo}\eqref{defconvgeoc}.  
If $\tuple{E;\Phi}$ failed to satisfy \ref{defloconvgeo}\eqref{defloconvgeod}  with $X$, $d$, and $d'$, then 
$\restrict{\tuple{E;\Phi}}{X\cup\set{d,d'}}$ would not be a convex geometry.
\end{proof}

Closure operators satisfying \ref{defconvgeo}\eqref{defconvgeoc} will be called \emph{zero-preserving}.   
For a set $E$ and a  subset $\alg G$ of $\Pow(E)$, $\alg G$ is a \emph{zero-preserving closure system} on $E$ if $\emptyset, E\in \alg G$ and $\alg G$ is closed with respect to arbitrary intersections.  As it is well-known, zero-preserving closure systems and zero-preserving closure operators on $E$ mutually determine each other. Namely, the map assigning  $\alg G_\Phi:=\set{X\in \Pow(E): \Phi(X)=X}$ to a zero-preserving closure operator $\Phi$ on $E$ and the map assigning  $\Phi_{\alg G}\colon \Pow(E)\to \Pow(E)$, defined by $\Phi_{\alg G}(X):=\bigcap\set{Y\in \alg G: X\subseteq Y}$, to a zero-preserving  closure system $\alg G$ on $E$ are reciprocal bijections. 

\begin{definition}[Alternative definition of finite convex geometries]\label{defGconvgeo}
We say that $\tuple{E;\alg G}$ is a finite \emph{convex geometry} if $E$ is finite and  $\tuple{E;\Phi_{\alg G}}$ is a convex geometry in the sense of Definition~\ref{defconvgeo}.
\end{definition}

From now on, the paragraph preceding Definition~\ref{defGconvgeo}  enables us to use the notations  $\tuple{ E; \Phi}$ and $\tuple{E;\alg G}$ for the \emph{same} finite convex geometry interchangeably; then $\Phi$ and $\alg G$ are understood as $\Phi_{\alg G}$ and $\alg G_\Phi$, respectively.  The members of $\alg G$ are called the \emph{closed sets} of the convex geometry in question. Note that this abstract concept of closed sets corresponds to the geometric concept of \emph{convex sets}.
As usual, a partial ordering $\leq$ on a set $E$ is \emph{linear} if for every $x,y\in E$, we have $x\leq y$ or $y\leq x$. For simplicity, for a subset $X$ and an element $y$ of $E$,  we will use the notation 
\[ X < y \defiff (\forall x\in X)\,(x<y).
\]

\begin{lemma}[{$(\dagger)$ and Theorems 5.1 and 5.2 in Edelman and Jamison~\cite{edeljam}}]\label{lemmaedJam}\ 

\textup{(A)}  If  $\,\leq_1$, \dots, $\leq_t$ are linear orderings on a finite set $E=\set{1,2,\dots,n}$ and we define $\alg G$ as
\begin{equation}
\alg G:=\set\emptyset\cup \set{X\in \Pow(E): (\forall y\in E\setminus X)\,(\exists i\in\set{1,\dots,t})\,( X <_i y)}, 
\label{eqGhzBfgdF} 
\end{equation}
then $\tuple{E;\alg G}$ is a convex geometry.

\textup{(B)} Every finite convex geometry 
is isomorphic to some $\tuple{E;\alg G}$ such that $\alg G$ is determined  by finitely many linear orderings as in \eqref{eqGhzBfgdF}.
\end{lemma}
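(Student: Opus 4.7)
For part (A), my plan is to establish that $\alg G$ is a closure system whose associated closure operator $\Phi=\Phi_{\alg G}$ satisfies the anti-exchange property. After the routine verification that $\alg G$ is closed under intersection (for $y\in E\setminus(X_1\cap X_2)$ one has $y\notin X_j$ for some $j$, and a witness $\leq_i$ for $X_j<_i y$ also witnesses $X_1\cap X_2<_i y$), the key preparatory step is an explicit formula for $\Phi$. Writing $D_i(X):=\set{z\in E:\exists x\in X,\,z\leq_i x}$ for the $\leq_i$-downward closure, the definition of $\alg G$ is equivalent to the fixed-point condition $X=\bigcap_{i=1}^t D_i(X)$ for nonempty $X$, and a short computation using the idempotence of each $D_i$ upgrades this to $\Phi(X)=\bigcap_{i=1}^t D_i(X)$. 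With this formula in hand, anti-exchange becomes a short counting argument: given $A\in\alg G$ (we may replace $A$ by $\Phi(A)$) and distinct $x,y\in E\setminus A$ with $\Phi(A\cup\set x)=\Phi(A\cup\set y)$, I would partition $\set{1,\dots,t}=I_x\sqcup I_y$ according to whether $y<_i x$ or $x<_i y$; the membership $x\in\bigcap_i D_i(A\cup\set y)$ then forces $x\in D_i(A)$ for every $i\in I_x$, and symmetrically $y\in D_i(A)$ for every $i\in I_y$. Picking $i^*$ with $x\notin D_{i^*}(A)$ (which exists because $x\notin\Phi(A)$), one has $i^*\in I_y$, so $x<_{i^*}y\leq_{i^*}a$ for some $a\in A$, contradicting $x\notin D_{i^*}(A)$.

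For part (B), my plan is to produce the linear orderings from \emph{shellings} of $\tuple{E;\alg G}$, i.e., from maximal chains $\emptyset=X_0\subsetneq X_1\subsetneq\cdots\subsetneq X_n=E$ in the finite lattice $\alg G$. The central claim I will prove is that every cover in $\alg G$ adjoins exactly one element, so that writing $X_k=X_{k-1}\cup\set{e_k}$ a shelling determines a linear order $\leq_\sigma$ under which the $X_k$'s are exactly the $\leq_\sigma$-prefixes. Granting also that every $X\in\alg G$ is realised as a prefix of some shelling $\sigma_X$, the finite family $\set{\leq_{\sigma_X}:X\in\alg G}$ fed into (A) produces a system $\alg G'$. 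The inclusion $\alg G\subseteq\alg G'$ is immediate since each $X\in\alg G$ is a $\leq_{\sigma_X}$-prefix; conversely, the formula from the previous paragraph expresses any $Y\in\alg G'$ as $\bigcap_{X}D_{\sigma_X}(Y)$, and each factor $D_{\sigma_X}(Y)$ is itself a prefix of $\sigma_X$ and hence an element of $\alg G$, so $Y\in\alg G$.

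The main obstacle, and the step that genuinely needs the anti-exchange property of $\Phi$, is the single-element-cover lemma underlying the previous paragraph: if $Y\prec Z$ is a cover in $\alg G$, then $|Z\setminus Y|=1$. I will prove this by contradiction: two distinct elements $x_1,x_2\in Z\setminus Y$ would each satisfy $\Phi(Y\cup\set{x_j})=Z$, since the cover relation leaves no closed set strictly between $Y$ and $Z$; this would violate \ref{defconvgeo}\eqref{defconvgeob} at $A=Y$, which is already closed as $Y\in\alg G$. From this lemma the existence of a shelling through a prescribed $X\in\alg G$ follows by two-sided extension: choosing $X'\subsetneq X$ maximal in $\alg G$ and iterating produces a chain $\emptyset\subsetneq\cdots\subsetneq X$ whose every cover adds a single element, and choosing $X''\supsetneq X$ minimal in $\alg G$ and iterating produces $X\subsetneq\cdots\subsetneq E$ with the same property; concatenating the two halves delivers the required $\sigma_X$.
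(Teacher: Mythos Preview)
Your argument is correct and, for part~(B), follows essentially the same route as the paper: prove via anti-exchange that every cover $Y\prec Z$ in $\alg G$ satisfies $|Z\setminus Y|=1$, read off a linear order from each maximal chain, and then verify that the closure system \eqref{eqGhzBfgdF} produced by these orders coincides with $\alg G$. The paper takes \emph{all} maximal chains of $\alg G$ as its list $C_1,\dots,C_t$, whereas you index your orders by closed sets, choosing for each $X\in\alg G$ one shelling $\sigma_X$ through $X$; since every closed set lies on some maximal chain this is only a cosmetic difference. For the reverse inclusion $\alg G'\subseteq\alg G$, the paper argues pointwise (for each $y\notin X$ it exhibits a closed $X_y\in\alg G$ containing $X$ but not $y$, then intersects), while you use your formula $\Phi'(Y)=\bigcap_X D_{\sigma_X}(Y)$ together with the observation that each $D_{\sigma_X}(Y)$ is a prefix of a maximal chain and hence closed; these are the same idea in slightly different packaging.

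The one substantive addition in your proposal is a self-contained proof of part~(A), which the paper deliberately omits (it only uses and proves~(B)). Your argument there---the formula $\Phi(X)=\bigcap_i D_i(X)$ and the partition $I_x\sqcup I_y$ leading to the contradiction $x\in D_{i^*}(A)$---is clean and correct.
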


Note that essentially the same statement is derived in Adaricheva and Cz\'edli~\cite{kaczg} from a lattice theoretical result. Note also that the
minimum number of linear orderings that we need to represent a finite convex geometry according to (B) is the \emph{convex dimension} of the convex geometry. Finiteness could be dropped from part (A). However, the technical assumption that $E$ is of the form $\set{1,\dots,n}$ will be convenient in the proof of Theorem~\ref{thmmain}.
We will only use part (B). Since its proof is short and we have formulated the above statement a bit differently from \cite{edeljam}, we present the argument below.

\begin{figure}[ht] 
\centerline
{\includegraphics[scale=1.0]{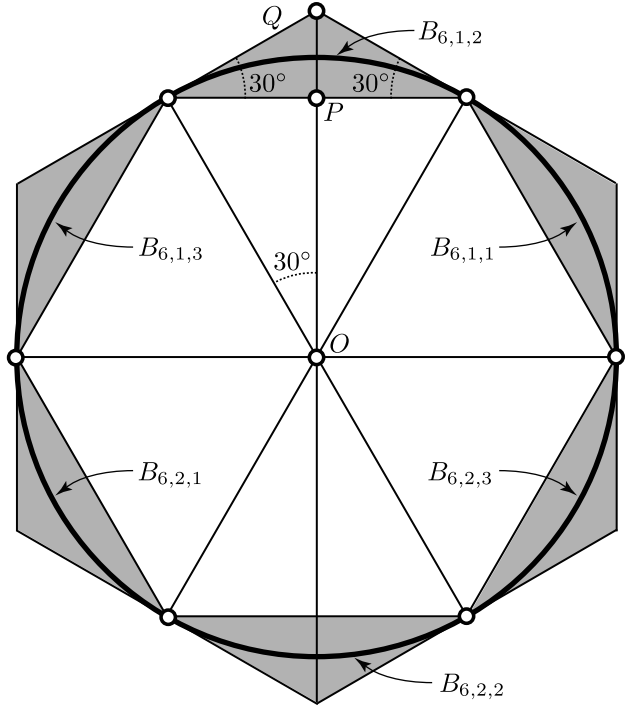}}
\caption{The unit circle with inscribed and circumscribed regular $t$-gons and $mt=6$ circular arcs}
\label{figsmpCr}
\end{figure}

\begin{proof}[Proof of Lemma~\textup{\ref{lemmaedJam}(B)}] 
Let $\tuple{E;\alg G'}$ be a finite convex geometry. Without loss of generality, we can assume that $E=\set{1,\dots,n}$.
With respect to set inclusion, $\alg G'$ is a lattice. Assume that $X\prec Y$ in this lattice, and let $a,b\in Y\setminus X$. Since $Y$ covers $X$, $\Phi(X\cup\set a)=Y=\Phi(X\cup\set b)$.  The anti-exchange property gives that $a=b$, whereby $Y\setminus X$ is a singleton. Hence, for $X,Y\in \alg  G'$,
\begin{equation}
X\prec Y\text{ in the lattice }\alg G' \text{ if{}f }  X\subset Y \text{ and }|Y\setminus X|=1.
\label{eqtxtMxhCvrG}
\end{equation}
Consequently, all maximal chains in $\alg G'$ are of length $n=|E|$. Let $C_1,\dots, C_t$ be a list of all maximal chains in $\alg G'$. By \eqref{eqtxtMxhCvrG}, $C_i$ is of the form
\[
C_i=\bigl\{\emptyset,\set{e_{i,1}}, \set{e_{i,1},e_{i,2}}, \dots,  \set{e_{i,1},e_{i,2},\dots,e_{i,n}} \bigr\},
\]
for $i\in\set{1,\dots,t}$. This allows us to define a linear ordering $\leq_i$ on $E$ as follows:
\[e_{i,1} <_i e_{i,2} <_i e_{i,3} <_i \dots <_i e_{i,n},
\]
for $i\in\set{1,\dots,t}$.
Let $\alg G$ denote what \eqref{eqGhzBfgdF} defines from these linear orderings; we need to show that $\alg G' =\alg G$. Note that $\emptyset\in \alg G\cap \alg G'$. First, assume that $\emptyset\neq X\in \alg G'$. As every element in a finite lattice, $X$ belongs to a maximal chain $C_i$. So $X$ is of the form 
$\set{e_{i,1},e_{i,2},\dots,e_{i,j}}$, and the same subscript $i$ witnesses that $X\in \alg G$. 

Second, assume that $\emptyset\neq X\in \alg G$. For each $y\in E\setminus X$, \eqref{eqGhzBfgdF} allows us to pick an $i=i(y)$ such that $X<_{i(y)} y$. Let $X_y:=\set{e\in E: e<_{i(y)} y}$; it belongs to $C_{i(y)}$, whence $X_y\in\alg G'$. Let $Z:=\bigcap\set{X_y: y\in E\setminus X}$. Since $\alg G'$, like every closure system, is $\bigcap$-closed, $Z\in \alg G'$.   Since all the $X_y$ include $X$, we have that $X\subseteq Z$. For each $y\in E\setminus X$, $y\notin X_y\supseteq Z$ gives that $y\notin Z$. This shows that $X\not\subset Z$. Hence, $X=Z\in \alg G'$, as required.
\end{proof}

\subsection{Almost-circles and their accuracies}

\begin{definition}\label{defFlmCrl}
For integers $t\geq 3$ and $m\in\Nat$ and an $m$-by-$t$ matrix $S=(s_{i,j})_{m\times t}$ of  real numbers from $(0,1)$,  we define a simple closed curve $\acirc {\Fczk} {S}$ as follows; see Figures~\ref{figfunc}--\ref{figcLcL}, where $m=2$, $t=3$, $\alpha<\beta$, and
\begin{equation}
S=\begin{pmatrix}
\alpha&\alpha&\beta\cr
\alpha&\beta&\beta
\end{pmatrix}.
\label{eqdzTnBsmTpxxX}
\end{equation}
Note in advance that $m$ plays the role of some sort of \emph{multiplicity} of the almost-circle we are going to define; it will turn out later that the smaller $\epsilon$ is, the larger multiplicity is needed to achieve the accuracy of $1-\epsilon$.
We start with the unit circle $\set{\pair x y: x^2+y^2=1}$; see Figure~\ref{figsmpCr}.
For $i\in\set{1,\dots,m}$ and $j\in\set{1,\dots,t}$, let $B_{m t,i,j}$ denote the arc of this circle with endpoints
\begin{align}
&\Bpair{\cos \frac{2\pi\cdot(m(i-1)+j-1)}{m t}}{\sin\frac{2\pi\cdot(m(i-1)+j-1)}{m t}}\,\text{ and} \label{eqdlhtpsthRcSa}\\
&\Bpair{\cos \frac{2\pi\cdot(m(i-1)+j)}{m t}}{\sin\frac{2\pi\cdot(m(i-1)+j)}{m t}}.
\label{eqdlhtpsthRcSb}
\end{align}
The secant and the tangent lines of this arc through its endpoints form an isosceles triangle
$B^\triangle_{m t,i,j}$.
These triangles are grey-filled in   Figure~\ref{figsmpCr}. Neither $B^\triangle_{m t,i,j}$, nor $\triangle(U,V,W)$ in  Figure~\ref{figfunc} is a degenerate triangle. Hence, 
for every for $\pair i j\in\set{1,\dots,m}\times \set{1,\dots,t}$, there exists a unique non-degenerate affine transformation $\psi_{m t,i,j}\colon \real^2\to \real^2$ mapping  $\triangle(U,V,W)$ onto $B^\triangle_{m t,i,j}$  such that $V$ and $U$ are mapped to the endpoints 
\eqref{eqdlhtpsthRcSa} and  \eqref{eqdlhtpsthRcSb}, respectively. We let 
\begin{equation}
A_{S,i,j}=\psi_{m t,i,j}(\graph{f_{s_{i,j}}});
\label{eqpsimtiJ}
\end{equation} 
remember that $f_{s_{i,j}}\in \Fczk$ was defined in \eqref{eqfalphdF}; see also Lemma~\ref{lemmaFczkmjt}.
The closed curve formed by these $A_{S,i,j}$ will be denoted by $\acirc {\Fczk} {S}$; see Figure~\ref{figcLcL}.  Note that, in order to increase the visibility of  $\acirc {\Fczk} {S}$, only one of the little triangles is fully grey and two others are partially grey in Figure~\ref{figcLcL}.
\end{definition}

\begin{figure}[ht] 
\centerline
{\includegraphics[scale=1.0]{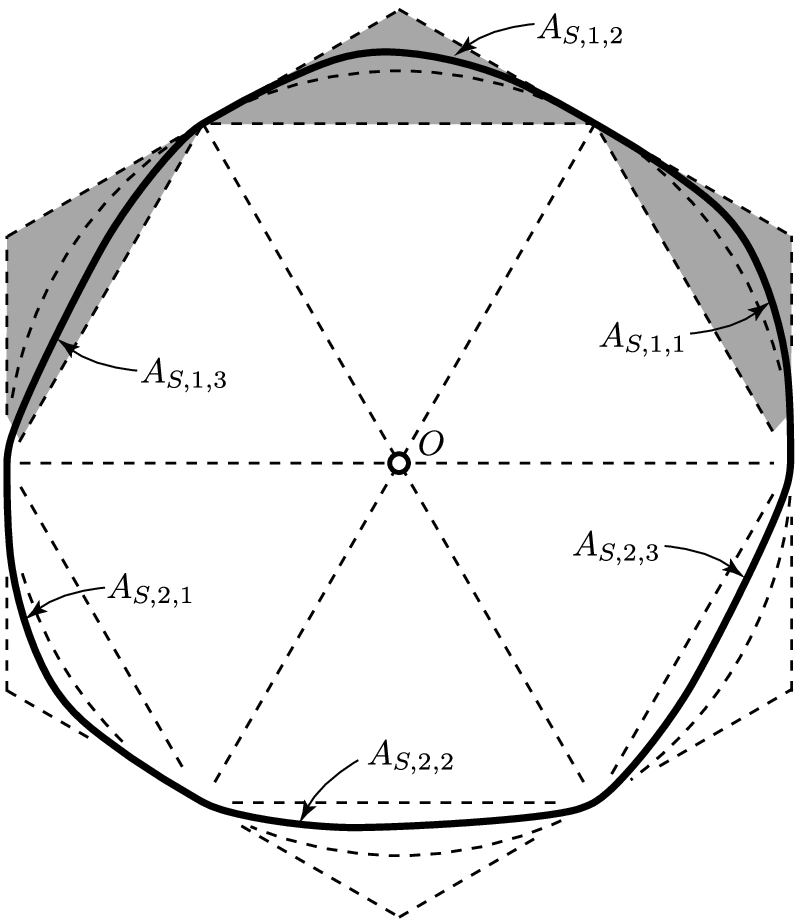}}
\caption{$\acirc {\Fczk} {S}$,  an almost-circle of accuracy $1-(\pi/6)^2\approx  0.7258$}  
\label{figcLcL}
\end{figure}

\begin{lemma}\label{lemmaFlHCr} 
$\acirc {\Fczk}  S$ defined in Definition~\ref{defFlmCrl} is an almost-circle of accuracy $1-(\pi/(mt))^2$.
\end{lemma}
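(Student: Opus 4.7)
The plan is to exhibit concrete inscribed and circumscribed circles of $\acirc{\Fczk}{S}$, both centered at the origin of the unit circle used in Definition~\ref{defFlmCrl}, with radii $r_1 = \cos(\pi/(mt))$ and $r_2 = 1/\cos(\pi/(mt))$. Their ratio $r_1/r_2 = \cos^2(\pi/(mt))$ is then bounded below by $1-(\pi/(mt))^2$ via the standard estimate $\cos x \geq 1 - x^2/2$ for $x\geq 0$: indeed, $(1-(\pi/(mt))^2/2)^2 = 1 - (\pi/(mt))^2 + ((\pi/(mt))^2/2)^2 \geq 1 - (\pi/(mt))^2$. (Since $mt\geq 3$, the intermediate quantity $1-(\pi/(mt))^2/2$ is positive, so squaring preserves the inequality.)

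The first main geometric task is to show that $\acirc{\Fczk}{S}$ lies in the closed annulus between these two circles. By property (F\ref{Fff}) of Lemma~\ref{lemmaFczkmjt}, $\graph{f_{s_{i,j}}}$ is contained in $\triangle(U,V,W)$, so each piece $A_{S,i,j} = \psi_{mt,i,j}(\graph{f_{s_{i,j}}})$ is contained in $B^\triangle_{mt,i,j}$. Elementary trigonometry on the unit circle divided into $mt$ equal arcs identifies the chord (base) of $B^\triangle_{mt,i,j}$ as lying at distance $\cos(\pi/(mt))$ from the origin (the apothem of the regular inscribed $mt$-gon), while the two tangent lines meet at the apex at distance $1/\cos(\pi/(mt))$. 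Since each piece lies on the apex-side of its chord, the chords collectively bound the inscribed regular $mt$-gon inside the region enclosed by $\acirc{\Fczk}{S}$, so the closed disk of radius $\cos(\pi/(mt))$ centered at the origin fits inside. Conversely, $B^\triangle_{mt,i,j}$ is the convex hull of three points at distance at most $1/\cos(\pi/(mt))$ from the origin, so it, and therefore each $A_{S,i,j}$, lies in the closed disk of radius $1/\cos(\pi/(mt))$.

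The remaining requirements of Definition~\ref{defAmstCrlE}—differentiability and convexity of $\acirc{\Fczk}{S}$—are checked as follows. By (F\ref{Ffc}), the tangent to $\graph{f_{s_{i,j}}}$ at $U$ equals $\ell(U,V)$ and at $W$ equals $\ell(V,W)$; under the affine map $\psi_{mt,i,j}$ these map to the two non-base sides of $B^\triangle_{mt,i,j}$, which by construction are the tangent lines to the unit circle at the respective arc endpoints. Hence at every arc endpoint the two adjacent pieces of $\acirc{\Fczk}{S}$ share a common tangent line (the tangent to the unit circle at that point), and $\acirc{\Fczk}{S}$ is everywhere differentiable. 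Convexity follows from strict concavity of each $f_{s_{i,j}}$ (property (F\ref{Ffb})): the tangent rotates monotonically along each piece by the angle $2\pi/(mt)$ that the arc subtends, and because the tangents match at the junctions, the tangent direction rotates monotonically by $2\pi$ around the closed curve, forcing convexity.

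The main obstacle is the geometric bookkeeping that each piece really lies on the outward (apex) side of its chord rather than the origin side, and that the two non-base sides of $B^\triangle_{mt,i,j}$ really are the tangent lines to the unit circle. Both rely on reading the affine correspondence in Definition~\ref{defFlmCrl} in the natural way—the base vertices of $\triangle(U,V,W)$ (the endpoints of $\graph{f_{s_{i,j}}}$) mapping to the arc endpoints and the apex $V$ mapping to the apex of $B^\triangle_{mt,i,j}$—so that the tangent data from (F\ref{Ffc}) transfers correctly. Once this is nailed down, the estimate $\cos^2(\pi/(mt)) \geq 1 - (\pi/(mt))^2$ is routine.
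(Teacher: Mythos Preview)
Your proof is correct and follows essentially the same route as the paper: both confine $\acirc{\Fczk}{S}$ to the annulus between the inscribed and circumscribed regular $mt$-gons via (F\ref{Fff}), check differentiability at the junctions via (F\ref{Ffc}) and the fact that adjacent triangles $B^\triangle_{mt,i,j}$ share a leg-line, and compute the radius ratio as $\cos^2(\pi/(mt))$. The only cosmetic difference is that the paper obtains the final estimate via $\cos^2 x = 1-\sin^2 x > 1-x^2$ (using $\sin x<x$) rather than squaring $\cos x\ge 1-x^2/2$; your convexity argument is also spelled out more explicitly than the paper's appeal to (F\ref{Ffa})--(F\ref{Fff}).
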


\begin{proof} By (F\ref{Ffc}), the graphs of the $f_{s_{i,j}}$, for $\pair i j\in\set{1,\dots, m}\times \set{1,\dots, t}$, are tangent at their endpoints to the legs of  $\triangle(U,V,W)$; see Figure~\ref{figfunc}.
Since this property is preserved by the transformations $\psi_{m t,i,j}$, see \eqref{eqpsimtiJ}, and a leg of every little grey isosceles triangle in Figures~\ref{figsmpCr} and \ref{figcLcL} lies on the same line as a leg of the next little triangle, it follows that  $\acirc {\Fczk} {S}$ is differentiable where its  arcs, the  $A_{S,i,j}$, are joined. Elementary trigonometry yields that the ratio of $\overline{OP}$ and  $\overline{OQ}$ is $\cos^2(\pi/(mt))=1-\sin^2(\pi/(mt))> 1- (\pi/(mt))^2$; see Figure~\ref{figsmpCr}. 
Therefore, Lemma~\ref{lemmaFczkmjt} and, thus, (F\ref{Ffa})--(F\ref{Fff}) imply Lemma~\ref{lemmaFlHCr} in a straightforward way.
\end{proof}

\subsection{``Concentric'' almost-circles at work}
The title of this subsection only roughly describes its content, because an almost-circle does not have a well-defined center in general. However, only almost circles of the form  $\acirc {\Fczk} {S}$ will occur in this section, and an almost circle $\acirc {\Fczk} {S}$ does have a unique center, which is the center of the corresponding unit circle; see Figure~\ref{figsmpCr}. Actually, we are going to use  concentric almost-circles, which have the same center $\pair 0 0$.
 
We need the following construction; actually, it is a part of the subsequent lemma. This construction shows a lot of similarity with 
that given in Richter and Rogers~\cite{richterrogers}, but we work with curves rather than vertices; actually, the number of our curves will be $m$ times more than the number of their vertices. 
\nothing{Note that we use the original orderings from Edelman and Jamison~\cite{edeljam}; see Lemma~\ref{lemmaedJam} here.}

\begin{definition}\label{defCtRsnL}
For $n\in\Nat$, let 
$\vec o:=\tuple{\leq_1, \dots, \leq_t}$ be a $t$-tuple of linear orderings on the set $E=\set{1,\dots,n}$, and let $\tuple{E;\alg G_{\eqref{eqGhzBfgdF}}}$ be the convex geometry defined in \eqref{eqGhzBfgdF}. Let $m\in\Nat$, the \emph{multiplicity} in the our construction, and let $K$ be a subset of the open interval $(0,1)$ of real numbers such that $|K|=mnt$. 
We order the Cartesian product $\set{1,\dots,m}\times\set{1,\dots,t} \times\set{1,\dots,n}$ lexicographically;  for example,
$\tuple{1,1,3}\lexless3\tuple{1,2,1}$.
The three-fold Cartesian product above and $K$ are of the same size. Hence,  equipped with $\lexless3$, this product is order isomorphic to  $\tuple{K;<}$, where $<$ is the usual ordering of real numbers. Thus, we can write $K$ in the form 
\begin{equation}
\parbox{10cm}{
$K=\{\aijk i j k: \tuple{i,j,k}\in\set{1,\dots,m}\times\set{1,\dots,t}\times\set{1,\dots,n}\}$ such that $\aijk i j k < \aijk {i'}{j'}{k'}$ iff $\tuple{i,j,k}\lexless3 \tuple{i',j',k'}$.}
\label{eqtxtKlxGrrDr}
\end{equation}
For $e\in E$ and $j\in\set{1,\dots,t}$, let 
\begin{equation}
r(j,e)=|\set{x\in E: e\leq_j x}|.
\label{eqpiedF}
\end{equation}
That is, $r(j,e)$ denotes the position of $e$ according to the $j$-th ordering and counted backwards. Associated with $e\in E$, we define an  
\begin{equation}
\parbox{8cm}{$m$-by-$t$ matrix $S(e)=S(m,\vec o,K;e)=(s_{i,j}(e))_{m\times t}$ by the rule
$s_{i,j}(e):= \aijk i j{r(j,e)}\in K$.} 
\label{eqsijkijpie}
\end{equation}
Using the almost-circles $\acirc {\Fczk}  {S(m,\vec o,K;e)}$ constructed in Definition~\ref{defFlmCrl},  we let 
\begin{equation}
\parbox{8.7cm}
{$E_{\eqref{eqtxtdhNBmTrbBcB}}=\set{\acirc {\Fczk}  {S(m,\vec o,K;e)}: e\in E}$  and, with $\Hull{E_{\eqref{eqtxtdhNBmTrbBcB}}}\colon \Pow(E_{\eqref{eqtxtdhNBmTrbBcB}})\to \Pow(E_{\eqref{eqtxtdhNBmTrbBcB}})$ given in \eqref{eqconvTanydef}, we let  $
\alg G_{\eqref{eqtxtdhNBmTrbBcB}}:=\set{X\subseteq \Pow(E_{\eqref{eqtxtdhNBmTrbBcB}}): X=\Hull{E_{\eqref{eqtxtdhNBmTrbBcB}}}(X) }$.}
\label{eqtxtdhNBmTrbBcB}
\end{equation}
\end{definition}

\begin{remark}\label{remuniqueije}
With the notation of Definition~\ref{defCtRsnL}, for each $\beta\in K$, there exists a \emph{unique} triplet 
$\tuple{i,j,e}\in\set{1,\dots,m}\times\set{1,\dots,t}\times E
$
such that $s_{i,j}(e)$, the \hbox{$\pair i j$-th} entry of 
$S(m,\vec o,K;e)$, is $\beta$. Indeed, there is a unique triplet  $\tuple{i,j,k}$ such that $\beta=\aijk i j k$, and, by  \eqref{eqsijkijpie}, $e$ is the $(n+1-k)$-th element of $E$ with respect to $\leq_j$.
\end{remark}

\begin{lemma}\label{lemmagyrPrmTdD}
 With the notation of Definition~\ref{defCtRsnL}, 
$\tuple{E_{\eqref{eqtxtdhNBmTrbBcB}}; \alg G_{\eqref{eqtxtdhNBmTrbBcB}}}$ 
is a convex geometry and it is isomorphic to  $\tuple{E; \alg G_{\eqref{eqGhzBfgdF}}}$.
\end{lemma}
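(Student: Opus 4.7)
I abbreviate $C_e:=\acirc{\Fczk}{S(m,\vec o,K;e)}$, so $E_{\eqref{eqtxtdhNBmTrbBcB}}=\set{C_e:e\in E}$. The plan is to verify that $\phi\colon E\to E_{\eqref{eqtxtdhNBmTrbBcB}}$, defined by $\phi(e):=C_e$, is a bijection that carries $\alg G_{\eqref{eqGhzBfgdF}}$ onto $\alg G_{\eqref{eqtxtdhNBmTrbBcB}}$. By Remark~\ref{remuniqueije}, distinct $e$'s yield distinct matrices $S(e)$; by (F\ref{Ffd}) different matrices produce different almost-circles, so $\phi$ is a bijection.

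The order-theoretic core is that on each column $j$, the entries $s_{i,j}(\cdot)$ reverse $\leq_j$: from \eqref{eqpiedF}, $e\geq_j e'$ iff $r(j,e)\leq r(j,e')$, and the lex ordering of $K$ in \eqref{eqtxtKlxGrrDr} promotes this to $s_{i,j}(e)\leq s_{i,j}(e')$. Combined with (F\ref{Ffd}), this says that for each position $(i,j)$ the arcs $\set{A_{S(e),i,j}:e\in E}$ form a strictly nested family inside the little triangle $B^\triangle_{mt,i,j}$, ordered from chord toward tip in the same way as $\leq_j$.

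In view of Lemma~\ref{lemmaedJam}(B), it suffices to prove, for $d\in E$ and $Y\subseteq E$, the equivalence
\[
C_d\subseteq\rhullo2\Bigl(\,\bigcup_{e\in Y}\Pnt(C_e)\Bigr)\iff (\forall j\in\set{1,\dots,t})(\exists e\in Y)\ d\leq_j e,
\]
because the right-hand side is, by \eqref{eqGhzBfgdF}, exactly the condition that $d$ lies in the abstract closure of $Y$. The ($\Leftarrow$) direction is immediate: for each position $(i,j)$ pick $e\in Y$ with $d\leq_j e$; by the nesting above, $A_{S(d),i,j}$ lies pointwise below $A_{S(e),i,j}$ in $B^\triangle_{mt,i,j}$, so it lies in the convex region bounded by $A_{S(e),i,j}$ and its chord, which is contained in $\rhullo2(\Pnt(C_e))$. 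Hence every arc of $C_d$, and thus $C_d$ itself, lies in $\rhullo2(\bigcup_{e\in Y}\Pnt(C_e))$.

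The main obstacle is the ($\Rightarrow$) direction in contrapositive form: assuming that some $j_0$ satisfies $e<_{j_0}d$ for every $e\in Y$, exhibit a point of $C_d$ outside the convex hull. I would use a supporting-hyperplane argument. Let $v$ be the outward unit normal of the unit circle at the midpoint of its $(1,j_0)$-arc and write $\theta$ for its angle. As one traverses any arc $A_{S(C),1,j_0}$ of an almost-circle $C$ from one endpoint to the other, the outward unit normal of $C$ sweeps continuously and monotonically (by (F\ref{Ffb})) through the angular interval $[\theta-\pi/(mt),\theta+\pi/(mt)]$ determined by the two tangent lines of $B^\triangle_{mt,1,j_0}$; therefore $q\mapsto\langle v,q\rangle$ attains its maximum over $C_e$ (resp.\ over $C_d$) at a unique point of $A_{S(e),1,j_0}$ (resp.\ $A_{S(d),1,j_0}$). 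Since $v$ is perpendicular to the chord of $B^\triangle_{mt,1,j_0}$ and points from chord to tip, and since $A_{S(d),1,j_0}$ is strictly pointwise farther from that chord than every $A_{S(e),1,j_0}$ for $e\in Y$ (by the correspondence above and (F\ref{Ffd}), via the similarity $\psi_{mt,1,j_0}$), the $v$-max of $C_d$ strictly exceeds the $v$-max of every $C_e$; the hyperplane through the $v$-max of $C_d$ perpendicular to $v$ therefore separates this point from $\bigcup_{e\in Y}\Pnt(C_e)$ and hence from $\rhullo2(\bigcup_{e\in Y}\Pnt(C_e))$. This completes the equivalence, so $\phi$ is an isomorphism of convex geometries and, in particular, $\tuple{E_{\eqref{eqtxtdhNBmTrbBcB}};\alg G_{\eqref{eqtxtdhNBmTrbBcB}}}$ is a convex geometry.
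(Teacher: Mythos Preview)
Your argument is correct and follows essentially the same route as the paper: set up the bijection $e\mapsto C_e$, translate the defining condition \eqref{eqGhzBfgdF} into the nesting of the $(i,j)$-arcs via \eqref{eqpiedF}, \eqref{eqtxtKlxGrrDr}, \eqref{eqsijkijpie} and (F\ref{Ffd}), and check both inclusions between $\alg G_{\eqref{eqGhzBfgdF}}$ and $\alg G_{\eqref{eqtxtdhNBmTrbBcB}}$. Your supporting-hyperplane step for the $(\Rightarrow)$ direction is a more explicit version of what the paper records informally as ``the $(i,j)$-th arc of $H(y)$ is outside all members of $H(X)$''.

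One point to correct: $\psi_{mt,1,j_0}$ is \emph{not} a similarity in general. The base angles of $\Delta(U,V,W)$ are $\pi/4$, while those of $B^\triangle_{mt,1,j_0}$ are $\pi/(mt)$, so the two isoceles triangles are similar only when $mt=4$. Your conclusion that ``$A_{S(d),1,j_0}$ is strictly pointwise farther from the chord than every $A_{S(e),1,j_0}$'' is nevertheless correct, for the following reason: since $\psi_{mt,1,j_0}$ sends the base $UV$ to the chord and the apex $W$ to the apex of $B^\triangle_{mt,1,j_0}$, and both triangles are isoceles with that vertex correspondence, the linear part of $\psi_{mt,1,j_0}$ takes the horizontal direction to the chord direction and the vertical direction to the direction $v$ perpendicular to the chord. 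Hence $\langle v,\psi_{mt,1,j_0}(x,y)\rangle$ is an increasing affine function of $y$ alone, and (F\ref{Ffd}) transports to the desired pointwise inequality in the $v$-coordinate. Replace ``similarity'' by this observation and the proof stands.
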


\begin{proof} Letting $H(e)=\acirc {\Fczk}  {S(m,\vec o,K;e)}$, we define a surjective map $H$ from $E=\set{1,\dots,n}$ to $E_{\eqref{eqtxtdhNBmTrbBcB}}$. 
Let $i\in\set{1,\dots,m}$ and $j\in\set{1,\dots,t}$,  and assume that $e_1$ and $e_2$ are  distinct elements of $E$. Then either $e_1<_j e_2$ or  
$e_2<_j e_1$. Hence $r(j,e_1)\neq r(j,e_2)$.
It follows from \eqref{eqtxtKlxGrrDr} and \eqref{eqsijkijpie} that 
$s_{i,j}(e_1)= \aijk  i j {r(j,e_1)} \neq \aijk  i j {r(j,e_2)} = s_{i,j}(e_2)$. Hence, Definition~\ref{defFlmCrl} and (F\ref{Ffe}), or even (F\ref{Ffd}), yield that $H(e_1)$ is distinct from $H(e_2)$; actually, they do not even have an arc in common. Thus, $H$ is injective and it is a bijection.

Next,  we are going to show that, for every $X$,
\begin{equation}
\text{if }X\in \Pow(E)\setminus\alg G_{\eqref{eqGhzBfgdF}}\text{, then }
H(X)\in \Pow(E)\setminus\alg G_{\eqref{eqtxtdhNBmTrbBcB}}.
\label{eqtxtGcmLgpr}
\end{equation}
We know from \eqref{eqGhzBfgdF} that $\emptyset\in\alg G_{\eqref{eqGhzBfgdF}}$, whereby we can assume that $X\neq\emptyset$.
Since $X\notin\alg G_{\eqref{eqGhzBfgdF}}$, \eqref{eqGhzBfgdF} yields a $y\in E\setminus X$ such that for all $j\in\set{1,\dots,t}$, $X\nless_j y$. 
Hence, for each $j\in\set{1,\dots,t}$, we can pick an $e_j\in X$ such that $y<_j e_j$. 
By \eqref{eqpiedF}, $r(j,y)>r(j,e_j)$. Hence, 
\eqref{eqtxtKlxGrrDr}   and \eqref{eqsijkijpie} give that $s_{i,j}(y)>s_{i,j}(e_j)$ holds for all $i\in\set{1,\dots, m}$ and $j\in\set{1,\dots,t}$. By (F\ref{Ffd}) and  Definition~\ref{defFlmCrl}, the $\pair i j$-th arc
 $A_{S(m,\vec o,K;y),i,j}$ of 
 $H(y)=\acirc {\Fczk}  {S(m,\vec o,K;y)}$  is closer to the center $O:=\pair 0 0$ of the unit circle than the $\pair i j$-th arc
 $A_{S(m,\vec o,K;e_j),i,j}$ of 
 $H(e_j)=\acirc {\Fczk}  {S(m,\vec o,K;e_j)}$.
Here, ``closer'' means that only the endpoints are in common but for every inner point $P$ of the second arc, the line segment $\overline{OP}$ has an inner point lying on the interior of the first arc. Therefore, 
$H(y)\in \Hull{E_{\eqref{eqtxtdhNBmTrbBcB}}}\set{H(e_j): j\in\set{1,\dots,t}} \subseteq \Hull{E_{\eqref{eqtxtdhNBmTrbBcB}}}(H(X))$. However, $y\notin X$  and the injectivity of $H$ give that $H(y)\notin H(X)$. This indicates that $H(X)$ is not closed with respect to $\Hull{E_{\eqref{eqtxtdhNBmTrbBcB}}}$. Thus, $H(X)\notin \alg G_{\eqref{eqtxtdhNBmTrbBcB}}$, proving \eqref{eqtxtGcmLgpr}.

Next, we are going to show converse implication, that is, 
\begin{equation}
\text{if }X\in \alg G_{\eqref{eqGhzBfgdF}}\text{, then }
H(X)\in \alg G_{\eqref{eqtxtdhNBmTrbBcB}}.
\label{eqtxtGcHjTqgr}
\end{equation}
Assume that $X\in\alg G_{\eqref{eqGhzBfgdF}}$. In order to verify \eqref{eqtxtGcHjTqgr}, we need to show that
for every $y'\in E_{\eqref{eqtxtdhNBmTrbBcB}}\setminus H(X)$, we have that  $y'\notin \Hull{E_{\eqref{eqtxtdhNBmTrbBcB}}}(H(X))$. Since $H$ is a bijection, $y'=H(y)$ for a uniquely determined $y\in E\setminus X$. Applying \eqref{eqGhzBfgdF} to this $y$, we obtain  a $j\in\set{1,\dots,t}$ such that $X<_j y$. Hence, for every $e\in X$, $e<_j y$. Thus, \eqref{eqpiedF} gives that $r(j,e)>r(j,y)$. Combining this with  \eqref{eqtxtKlxGrrDr}  and \eqref{eqsijkijpie}, we conclude that, for all $i\in\set{1,\dots, m}$, $s_{i,j}(e) > s_{i,j}(y)$.
(Actually, one such $i$ is sufficient in the present argument.)  
By (F\ref{Ffd}) and  Definition~\ref{defFlmCrl}, the $\pair i j$-th arc  $A_{S(m,\vec o,K;e),i,j}$ of 
 $H(e)=\acirc {\Fczk}  {S(m,\vec o,K;e)}$  is closer to the center $O:=\pair 0 0$ than the $\pair i j$-th arc
 $A_{S(m,\vec o,K;y),i,j}$ of 
 $H(y)=\acirc {\Fczk}  {S(m,\vec o,K;y)}$. 
This means that the $\pair i j$-th arc of $H(y)$ is ``outside''  $H(e)$ for all $e\in X$.  That is, the  $\pair i j$-th curve of $H(y)$ is outside all members of $H(X)$.    Consequently, $y'=H(y)  \notin \Hull{E_{\eqref{eqtxtdhNBmTrbBcB}}}(H(X))$, as required.
This proves \eqref{eqtxtGcHjTqgr}. 
Finally, \eqref{eqtxtGcmLgpr} and \eqref{eqtxtGcHjTqgr} imply that the bijection $H$ is actually an isomorphism, proving Lemma~\ref{lemmagyrPrmTdD}.
\end{proof}

\subsection{The rest of the proof}
We are going to define an appropriate set $\Tczk$ needed by Theorem~\ref{thmmain}.
Consider the set $U$ of all triplets $\tuple{n,\vec o,m}$ where 
\begin{enumeratei}
\item  $n$ and $m$ are positive integers;
\item $\vec o=\tuple{\leq_1,\leq_2,\dots,\leq_t}$ is a nonempty tuple of finitely many linear orderings on the set $E:=\set{1,\dots,n}$; the number of its components is $\dim(\vec o)=t\in\Nat$.
\end{enumeratei}
Since $U$ is a countably infinite set,
we obtain from basic cardinal arithmetic that $|\real|\cdot|U|=|(0,1)|$. This allows us to partition the real interval $(0,1)$ as a  union 
$(0,1)=\bigcup\set{I_{n,\vec o,m}: \tuple{n,\vec o,m}\in U}
$
of pairwise disjoint subsets such that  $|I_{n,\vec o,m}|=2^{\aleph_0}$ for all $\tuple{n,\vec o,m}\in U$. Note that 
if we wrote the elements of $(0,1)$ into unique decimal forms not ending with $999\dots$ (infinitely many), then Cantor's well-known method together with lots of technicalities would allow us to define $I_{n,\vec o,m}$ and, eventually, $\Tczk$ uniquely. However, we do not seek  uniqueness; our only goal is to prove the \emph{existence} of an appropriate $\Tczk$. In the next step, using the equality $|\real|\cdot n\cdot\dim(\vec o)\cdot m=2^{\aleph_0}=|I_{n,\vec o,m}|$, which is trivial from cardinal arithmetic, we can partition $I_{n,\vec o,m}$ as the union
$I_{n,\vec o,m}=\bigcup\set{K_{\kappa,n,\vec o,m}: \tuple{\kappa,\tuple{n,\vec o, m}}   \in\real\times U}
$
of pairwise disjoint $n\cdot\dim(\vec o)\cdot m$-element subsets of $I_{n,\vec o,m}$. In order to ease the notation, we will write $\tuple{\kappa,n,\vec o, m}$ rather than $\tuple{\kappa,\tuple{n,\vec o, m}}$. Then, clearly, 
\begin{equation}
\parbox{9.3cm}{for every $\tuple{\kappa,n,\vec o,m}\in\real\times U$, $K_{\kappa,n,\vec o,m}\subset(0,1)$ and  $|K_{\kappa,n,\vec o,m}|=n\cdot\dim(\vec o)\cdot m$, and whenever $\tuple{\kappa,n,\vec o,m}\neq \tuple{\kappa',n',\vvec o',m'}$, then $K_{\kappa,n,\vec o,m}$ is disjoint from $K_{\kappa',n',\vvec o',m'}$.}
\label{eqtxtKlyNlTt}
\end{equation}
Next, with the almost-circles $\acirc {\Fczk}  {S(m,\vec o,K_{\kappa,n,\vec o,m};e)}$ from \eqref{eqtxtdhNBmTrbBcB}, we let
\begin{equation}
\begin{aligned}
\Tczk :=&\{\psi\bigl( \acirc {\Fczk}  {S(m,\vec o,K_{\kappa,n,\vec o,m};e)} \bigr): \tuple{\kappa,n,\vec o,m}\in\real\times U,\cr
&\phantom{\{}e\in\set{1,\dots,n}, \text{ and } \psi\colon\real^2\to\real^2\text{ is a non-degenerate }\cr
&\phantom{\{}\text{affine transformation}\}\,
\cup \,\set{\set{\vec p}:\vec p\in\real^2} . 
\end{aligned}
\label{eqzGhBmTxlXq}
\end{equation}
Now, we are in the position to prove our theorem.

\begin{proof}[Proof of Theorem~\ref{thmmain}]
The $\acirc {\Fczk}  {S(m,\vec o,K_{\kappa,n,\vec o,m};e)}$ in \eqref{eqzGhBmTxlXq} are almost circles by Lemma~\ref{lemmaFlHCr}. Hence, by Definition~\ref{defAmstCrlE}, they are 
differentiable convex simple closed planar curves. So are their images by non-degenerate affine transformations, proving  part \eqref{thmmaina} of the theorem.

Part \eqref{thmmainc} is a trivial consequence of \eqref{eqzGhBmTxlXq}, since the composite of two non-degenerate affine transformations is again a non-degenerate affine transformation.

In order to prove part \eqref{thmmaind}, take an arbitrary finite convex geometry and a positive $\epsilon<1$. By Lemma~\ref{lemmaedJam}, 
we can assume that this convex geometry is given on a set $\set{1,\dots, n}$ with the help of a $\dim(\vec o)$-tuple $\vec o$ of linear orderings on $\set{1,\dots, n}$; see \eqref{eqGhzBfgdF}. Pick an $m\in\Nat$ such that $m \geq\pi\cdot \dim(\vvec o)^{-1}\epsilon^{-1/2}$, and let $\kappa\in\real$. We also need $K_{\kappa,n,\vec o,m}$; see  \eqref{eqtxtKlyNlTt}. 
We apply Definition~\ref{defCtRsnL} and, in particular, \eqref{eqtxtdhNBmTrbBcB}, to 
$\tuple{n,m,\vec o,K_{\kappa,n,\vec o,m}}$
instead of $\tuple{n,m,\vec o,K}$ in order to obtain $\tuple{E^{(\kappa,n,\vec o,m )}_{\eqref{eqtxtdhNBmTrbBcB}},\alg G^{(\kappa,n,\vec o,m )}_{\eqref{eqtxtdhNBmTrbBcB}}}$. Here, 
\begin{equation}
E^{(\kappa,n,\vec o,m )}_{\eqref{eqtxtdhNBmTrbBcB}} = \set{\acirc {\Fczk}  {S(m,\vec o,K_{\kappa,n,\vec o,m};e)}: e\in\set{1,\dots,n}}.
\label{eqduGnbTrWsG}
\end{equation}
By Lemma~\ref{lemmaFlHCr}, these almost-circles are of accuracy 
\begin{align*}
1-(\pi/(m\cdot \dim(\vvec o))^2 &\geq 1- (\pi/(\pi\cdot \dim(\vvec o)^{-1}\epsilon^{-1/2}\cdot\dim(\vvec o ))^2\cr
&=1-\epsilon.
\end{align*}
By Lemma~\ref{lemmagyrPrmTdD},  
$\tuple{E^{(\kappa,n,\vec o,m )}_{\eqref{eqtxtdhNBmTrbBcB}},\alg G^{(\kappa,n,\vec o,m )}_{\eqref{eqtxtdhNBmTrbBcB}}}$ is isomorphic to  the arbitrary convex geometry we started with. This proves the first half of part \eqref{thmmaind} of the Theorem. 
In order to prove the second half, suppose for a contradiction that $\tuple{\kappa,n,\vec o,m}$ and $\tuple{\kappa',n',\vvec o',m'}$ are distinct quadruples of $\real\times U$ but
$E^{(\kappa,n,\vec o,m )}_{\eqref{eqtxtdhNBmTrbBcB}}$ is not affine-disjoint from $E^{(\kappa',n',\vvec o',m')}_{\eqref{eqtxtdhNBmTrbBcB}}$. Hence, there is an almost-circle $C_1$ in the first set and a non-degenerate affine transformation $\psi\colon \real^2\to\real^2$ such that $C_2:=\psi(C_1)$ belongs to the second set. 
Since $C_1$ is one of the almost-circles occurring in \eqref{eqduGnbTrWsG}, we know from \eqref{eqpsimtiJ} that its  arcs are of  the form $\psi_{m\dim(\vvec o),i,j}(\graph{f_{s_{i,j}}})$. 
By \eqref{eqsijkijpie} and  \eqref{eqduGnbTrWsG}, $\alpha:=s_{i,j}$ belongs to 
$K_{\kappa,n,\vec o,m}$. Hence, the arcs of $C_1$ are non-degenerate affine images of finitely many
graphs $\graph{f_{\alpha_1}}$, $\graph{f_{\alpha_2}}$, \dots{}   such that $\alpha_1,\alpha_2,\dots$ belong to $K_{\kappa,n,\vec o,m}$. 
By the same reason, 
the arcs of $C_2$ are non-degenerate affine images of finitely many
graphs $\graph{f_{\beta_1}}$, $\graph{f_{\beta_2}}$, \dots with $\beta_1, \beta_2,\dots$ belonging to $K_{\kappa',n',\vvec o',m'}$. Since $\tuple{\kappa,n,\vec o,m}$ and $\tuple{\kappa',n',\vvec o',m'}$ are distinct quadruples, we know from \eqref{eqtxtKlyNlTt} that $\set{\alpha_1,\alpha_2,\dots}$ is disjoint from $\set{\beta_1,\beta_2,\dots}$. Since $\psi(C_1)=C_2$, $\psi(\graph{f_{\alpha_1}})$ and some of $\psi(\graph{f_{\beta_1}})$, $\psi(\graph{f_{\beta_2}})$,\dots have a nonempty open arc in common. Since $\alpha_1\notin\set{\beta_1,\beta_2,\dots}$, this common arc contradicts (F\ref{Ffe}). 
Therefore, part \eqref{thmmaind} of the Theorem holds. 

Next, before dealing with  part \eqref{thmmainb}, we show that, for every $Y\subseteq \Tczk$,
\begin{equation}
\rhullo2(\Pnt(Y)) =
\rhullo2(\Pnt(\Hull\Tczk(Y))).
\label{eqcNvPnthnbnD}
\end{equation}
Using  \eqref{eqconvTanydef}, we have that 
\begin{align*}
\Pnt&(\Hull\Tczk(Y))\overset{\eqref{eqconvTanydef}}{=} \Pnt(\set{D\in\Tczk: D\subseteq \rhullo2(\Pnt(Y))} )\cr
&\subseteq \Pnt(\set{\rhullo2(\Pnt(Y))}) = \rhullo2(\Pnt(Y)).
\end{align*}
Applying $\rhullo2$ to both sides and using that $\rhullo2\circ\rhullo2 =\rhullo2$, 
\[\rhullo2(\Pnt(\Hull\Tczk(Y))) \subseteq \rhullo2(\Pnt(Y)).
\]
The converse inclusion also holds, because $\Hull\Tczk(Y)\supseteq Y$. This proves \eqref{eqcNvPnthnbnD}.

\begin{figure}[ht] 
\centerline
{\includegraphics[scale=1.0]{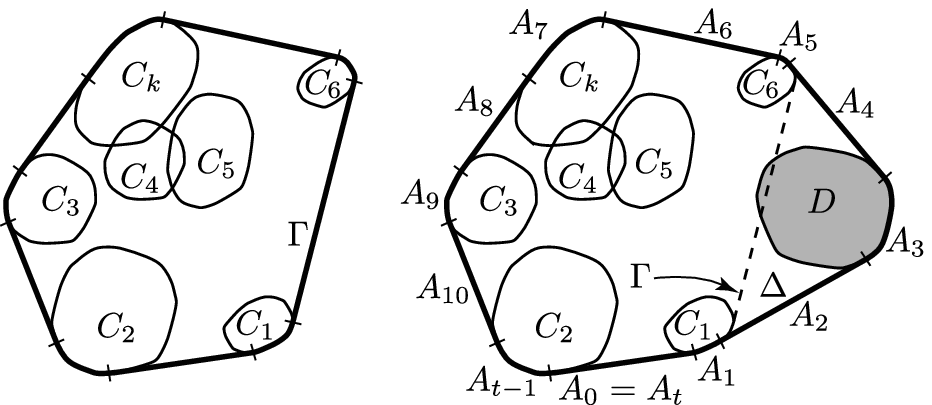}}
\caption{$\Hull\Tczk(\set{C_1,\dots,C_k,D})$ determines $D$, provided $D\notin\Hull\Tczk(\set{C_1,\dots,C_k})$}
\label{figtmstCrZLLs}
\end{figure}

\nothing{From several aspects, the proof of part \eqref{thmmainb} begins in the same way as that of  Cz\'edli~\cite[Proposition 2.1]{czgcircles} for circles. Hence, where the argument is the same as in \cite{czgcircles}, we give less details here.} 

From some aspects,  the proof of part \eqref{thmmainb} is analogous to that of   Cz\'edli~\cite[Proposition 2.1]{czgcircles} for circles.
If $D\in X\subseteq \Tczk$, then $D\subseteq\Pnt(X)\subseteq\rhullo2(\Pnt(X))$ gives that $D\in\Hull\Tczk(X)$. Hence, $X\subseteq \Hull\Tczk(X)$. Obviously, $\Hull\Tczk$ is monotone and zero-preserving. If $D\in\Hull\Tczk(\Hull\Tczk(X))$, then $D\in \Tczk$ and 
\begin{align*}
D\overset{\eqref{eqconvTanydef}}\subseteq 
\rhullo2(\Pnt(\Hull\Tczk(X))) \overset{\eqref{eqcNvPnthnbnD}}= \rhullo2(\Pnt(X)),
\end{align*}
whereby $D\in \Hull\Tczk(X)$. Hence, $\Hull\Tczk(\Hull\Tczk(X))\subseteq \Hull\Tczk(X)$, and $\Hull\Tczk$ is a zero-preserving closure operator. 
That is,  $\tuple{\Tczk;\Hull\Tczk}$ satisfies Definition~\ref{defconvgeo}\eqref{defconvgeoa} and \eqref{defconvgeoc}. In order to show that it also satisfies Definition~\ref{defloconvgeo}\eqref{defloconvgeod}, let $X=\set{C_1,\dots,C_t}\subseteq \Tczk$, $D,D'\in\Tczk$, and assume that $\Hull\Tczk(X\cup\set D)=\Hull\Tczk(X\cup\set {D'})$ and  none of $D$ and $D'$ belongs to $\Hull\Tczk(X)$. We need to give an affirmative answer to the question 
\begin{equation} 
\text{does $\,D = D'\,$ hold?}
\label{eqtxtnEEdthsWh}
\end{equation}
\nothing{Note that \cite{czgcircles} allows circles of radius 0, that is, singleton sets. If $D$ or $D'$ is a singleton, then argument given in  \cite{czgcircles} works without any essential change. Thus, we can assume that neither $D$, nor $D'$ is a singleton.} 
By \eqref{eqconvTanydef}, none of $D$ and $D'$ is a subset of $\rhullo2(\Pnt (X))$. 
Let $\Gamma$ be the boundary of $\rhullo2(\Pnt (X))$; see Figures~\ref{figtmstCrZLLs} and \ref{figtmstDntWchrcs}. Note that in these 
two figures, $D$ is grey-filled but no $D'$ distinct from $D$ is depicted. Singleton members of $X$ like $C_2$ in Figure~ \ref{figtmstDntWchrcs} cause no problem.

\begin{figure}[ht] 
\centerline
{\includegraphics[scale=1.0]{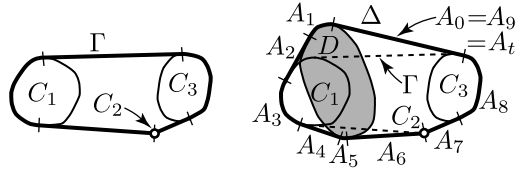}}
\caption{Two characteristic arcs, $A_1$ and $A_{5}$}
\label{figtmstDntWchrcs}
\end{figure}

We think of $\Gamma$ as a tight resilient rubber noose. Since  $D\nsubseteq\rhullo2(\Pnt (X))$, $D$ pushes $\Gamma$ outwards to obtain the boundary $\Delta$ of $\rhullo2{(\Pnt(X\cup\set D))}$.  It follows from \eqref{eqcNvPnthnbnD} that $\Delta$ is also the boundary of $\rhullo2{(\Pnt(X\cup\set {D'}))}$.
Hence, $D'$ also pushes $\Gamma$ outwards to $\Delta$. Observe that 
$\Delta$ can be decomposed into arcs  $A_0,A_1, \dots, A_{t-1}, A_t=A_0$ of positive lengths; see Figures~\ref{figtmstCrZLLs} and \ref{figtmstDntWchrcs}, and the same holds (with different $t$) for $\Gamma$.  Keep in mind that our terminology concerning arcs allows straight line segments as special arcs. When distinction is necessary, we speak of \emph{straight line segments} and \emph{non-straight arcs}.

First, assume that one of $D$ and $D'$ is a singleton. Let, say, $D$ be a singleton. Instead of a separate figure, take  $X=\set{C_1,C_3}$ and $D=C_2$ on the left of Figure~\ref{figtmstDntWchrcs} to see an example. Clearly, two straight line segments of $\Delta$, none of them being a part of a $\Gamma$-arc, form an angle with vertex $D$. This fact makes $D$ recognizable from $\Gamma$ and $\Delta$, and it follows that $D=D'$. Hence, in the rest of the proof, we can assume that none of $D$ and $D'$ is a singleton.

A non-straight arc of $\Delta$ will be called a \emph{characteristic arc} (with respect to $\Gamma$) if it is neither a  straight line segment, nor a subset of an arc of $\Gamma$. Necessarily, a characteristic arc is also an arc of $D$, and the same holds for $D'$. Since $\Delta\neq \Gamma$ and none of $D$ and $D'$ is a singleton, there is at least one characteristic arc. For example, the only characteristic arc in Figure~\ref{figtmstCrZLLs} is $A_3$, but there are two characteristic arcs, $A_1$ and $A_5$, in Figure~\ref{figtmstDntWchrcs}. Thus, $D$ and $D'$ have an arc of positive length in common: a characteristic arc $I_0$  of $\Delta$. By \eqref{eqzGhBmTxlXq}, there exist $\tuple{\kappa,n,\vec o,m}$ and 
$\tuple{\kappa',n',\vvec o',m'}$ in $\real\times U$, $e\in\set{1,\dots,n}$, $e'\in\set{1,\dots,n'}$, and  non-degenerate affine transformations $\mu$ and $\mu'$ such that
\begin{equation}
\begin{aligned}
D&=\mu\bigl(\acirc {\Fczk}  {S(m,\vec o,K_{\kappa,n,\vec o,m};e)}\bigr)\quad\text{and }\cr
D'&=\mu'\bigl(\acirc {\Fczk}  {S(m',\vvec o',K_{\kappa',n',\vvec o',m'};e')}\bigr).
\end{aligned}
\label{eqDnndDdDpr}
\end{equation}
Since  $D$ and $D'$ have the arc $I_0$ in common, their preimages, 
\[
\text{$C:=\acirc {\Fczk}  {S(m,\vec o,K_{\kappa,n,\vec o,m};e)}$ and $C':=\acirc {\Fczk}  {S(m',\vvec o',K_{\kappa',n',\vvec o',m'};e')}$,}
\]
have arcs 
\begin{equation}
A:=A_{S(m,\vec o,K_{\kappa,n,\vec o,m};e),i,j}\,\text{ and }\, 
A':=A_{S(m',\vvec o',K_{\kappa',n',\vvec o',m'};e'),i',j'},
\label{eqAdndundApr}
\end{equation}
respectively and in the sense of \eqref{eqpsimtiJ},  such that the $I_1\subseteq \mu(A)$ and $I_1\subseteq\mu'(A')$ holds for some nonempty open sub-arc $I_1$ of $I_0$; necessarily of positive length. By the construction of our almost-circles in Definition~\ref{defFlmCrl}, there are $\alpha,\alpha'\in (0,1)$ and non-degenerate affine transformations $\phi$ and $\phi'$ such that $A=\phi(\graph{f_{\alpha}})$ and $A'=\phi'(\graph{f_{\alpha'}})$. Letting $\psi:=\mu\circ\phi$ and $\psi':=\mu'\circ\phi'$, we have that $I_1$ is a common open arc of  both $\psi(\graph{f_{\alpha}})$ and $\psi'(\graph{f_{\alpha'}})$. 
It follows from (F\ref{Ffe}) that $\alpha=\alpha'$ and $\psi=\psi'$. 

Roughly saying, disregarding $\psi$ in \eqref{eqzGhBmTxlXq}, $f_\alpha \in \Tczk$ is used only once in the definition of $\Tczk$, that is only in one almost-circle and only at one edge of this almost-circle; this implies that $\phi=\phi'$.
However, we give rigorous details below.

By the construction, see Definition~\ref{defFlmCrl}, \eqref{eqsijkijpie}, and \eqref{eqzGhBmTxlXq}, $\alpha \in K_{\kappa,n,\vec o,m}$ is an entry of the matrix $S:=S(m,\vec o,K_{\kappa,n,\vec o,m};e)$  and $\alpha'\in  K_{\kappa',n',\vvec o',m'}$ is that of $S':=S(m',\vvec o,'K_{\kappa',n',\vvec o',m'};e')$. Since $\alpha=\alpha'$, \eqref{eqtxtKlyNlTt} yields that the quadruples $\tuple{\kappa,n,\vec o,m}$ and $\tuple{\kappa',n',\vvec o',m'}$ are the same. Hence, using the equality $\alpha=\alpha'$ together with  Remark~\ref{remuniqueije} 
for $\tuple{n,m,\vec o,K_{\kappa,n,\vec o,m}}$
in the role of  $\tuple{n,m,\vec o,K}$, we conclude that $S=S'$, and there is a unique triplet  $\tuple{i,j,e}$ such that $\alpha$ is the    $\pair i j$-th entry of $S$. Furthermore, $S=S'$ and $e=e'$ give that $C=C'$. Taking the uniqueness of $\pair i j$ also into account, we obtain that $A=A'$.  The position of the \hbox{$\pair i j$-th} arc in $C=C'$ is uniquely determined by its endpoints given in \eqref{eqdlhtpsthRcSa} and  \eqref{eqdlhtpsthRcSb}. Therefore, $\phi=\phi'$. 

Finally, multiplying $\mu\circ \phi =\psi=\psi'=\mu'\circ \phi'=\mu'\circ \phi$ by $\phi^{-1}$ from the right, we obtain that $\mu=\mu'$.
Armed with $\mu=\mu'$ and $C=C'$, \eqref{eqDnndDdDpr} gives that $D=D'$, as required in \eqref{eqtxtnEEdthsWh}. Thus, $\Tczk$  satisfies Definition~\ref{defloconvgeo}\eqref{defloconvgeod}. Hence, part \eqref{thmmainb} of the theorem holds and the proof if complete.
\end{proof}


\begin{thebibliography}{99}


\bibitem{kaczg}
  Adaricheva, K.; Cz\'edli, G.:
Note on the description of join-distributive lattices by permutations. Algebra Universalis \tbf{72}, 155--162 (2014) 

\bibitem{kirajbbooksection}
  Adaricheva, K.; Nation, J.B.:
  Convex geometries. In  Lattice Theory: Special Topics and Applications, volume 2, G. Gr\"atzer and F. Wehrung, eds., Birkh\"auser, 2015.

\bibitem{kajbn}
  Adaricheva, K.; Nation, J.B.:
  A class if infinite convex geometries.
  arXiv:1501.04174 

\bibitem{avann1} 
  S.P. Avann: 
  Application of the join-irreducible excess function to semimodular lattices.
  Math. Ann. \tbf{142}, 345--354 (1961)


\bibitem{czgcoord}
    Cz\'edli, G.: 
    Coordinatization of join-distributive lattices.
    Algebra Universalis \tbf{71}, 385--404 (2014)

\bibitem{czgcircles}
  Cz\'edli, G.: 
  Finite convex geometries of circles. 
  Discrete Math. \tbf{330}, 61--75 (2014)  


\bibitem{czgrigid}
  Cz\'edli, G.: 
  Large sets of lattices without order embeddings.
  Communications in Algebra \tbf{44}, 668--679 (2016)

\bibitem{czgdjsrigid}
  Cz\'edli, G.: 
  Jakub\'\i kov\'a-Studenovsk\'a, D.:
   Large rigid sets of algebras with respect to embeddability.
  Mathematica Slovaca \tbf{66}, 401--406 (2016)
  

\bibitem{czmmaroman}
  Cz\'edli, G.; Mar\'oti, M.; Romanowska, A.B.:
  A dyadic view of rational convex sets.
  Comment. Math. Univ. Carolin. \tbf{55}, 159--173 (2014)
 
    

\bibitem{dilworth40}
    R.P.~Dilworth:  
    Lattices with unique irreducible decompositions.
    Ann. of Math. (2) \tbf{41},  771--777 (1940)

\bibitem{duquenne}
  Duquenne, V.: 
  The core of finite lattices.
  Discrete Math. \tbf{88} 133--147 (1991) 

\bibitem{edeljam}
  P. H. Edelman;   R. E. Jamison:
  The theory of convex geometries.
  Geom. Dedicata \tbf{19}, 247--271 (1985)



\bibitem{kashiwabaraatalshelling}
  Kashiwabara, Kenji; Nakamura,  Masataka; Okamoto, Yoshio:
  The affine representation theorem for abstract convex geometries. 
Comput. Geom. \tbf{30} 129--144 (2005)

\bibitem{monjardet}
  B. Monjardet:
  A use for frequently rediscovering a concept.
  Order \tbf{1}, 415--417 (1985)

\bibitem{richterrogers}
  Richter, Michael; Rogers, Luke G.:
  Embedding convex geometries and a bound on convex dimension. Discrete Mathematics; accepted subject to minor changes; arXiv:1502.01941 


\end{thebibliography}
\end{document}